\newcommand{\Aut}{\operatorname{Aut}}
\newcommand{\Span}{\operatorname{Span}}
\newcommand{\Sp}{\operatorname{Sp}}
   \theoremstyle{plain}
   \newtheorem{thm}{Theorem}[section]
   \newtheorem{prop}[thm]{Proposition}
   \newtheorem{lemma}[thm]{Lemma}  
   \newtheorem{cor}[thm]{Corollary}
   \theoremstyle{definition}
   \newtheorem{example}[thm]{Example}
   \theoremstyle{remark}
   \newtheorem{remark}[thm]{Remark}
\definecolor{mybgcolor}{gray}{0.8}
\definecolor{myframecolor}{rgb}{.647,.129,.149}
\newmdenv[style=mystyle]{important}
   \numberwithin{equation}{section}
        \date{\today}
\title[KMS weights]{KMS weights on groupoid and graph $C^*$-algebras}  
\author{Klaus Thomsen}
\date{\today}
\email{matkt@imf.au.dk}
\address{Institut for Matematik, Aarhus University, Ny Munkegade, 8000 Aarhus C, Denmark}
\begin{document}

\maketitle





\begin{abstract} The paper contains a description of the KMS weights for the one-parameter action on the
  reduced $C^*$-algebra of a second countable locally compact
  Hausdorff \'etale groupoid, arising from a continuous
  real valued homomorphism satisfying two conditions. The result is subsequently applied to identify the KMS weights for the gauge action on a simple graph
  algebra. The von Neumann algebra generated by the GNS-representation
  of an extremal $\beta$-KMS weight is a factor, and tools are developed
  to determine its type. The paper concludes with
  three examples to illustrate the results.  
\end{abstract}

\section{Introduction}

The existence and uniqueness of the KMS state for the gauge action on
the $C^*$-algebra of a finite irreducible graph was proved by Enomoto,
Fujii and Watatani, \cite{EFW}, following the result of Olesen and
Pedersen concerning the Cuntz algebra $O_n$, cf. \cite{OP}. More
recently, the KMS states for the gauge action on the graph algebra of
a general finite graph, as well as on its Toeplitz extension, has been
studied in \cite{EL}, \cite{KW} and \cite{aHLRS}. The present work started from the wish to extend these
results to the gauge action on the $C^*$-algebra of an infinite graph, but it soon became
clear that there are typically many more KMS weights than KMS states since the $C^*$-algebra of an
infinite graph is not unital, and often stable. It is therefore 
more natural to look for weights rather than states, and to consider
KMS states, when they exist, as special KMS weights. This
is the point of view taken here, and we obtain a complete description
of the KMS weights for the gauge action on a simple graph $C^*$-algebra of a
row-finite graph without sinks. To obtain this we consider the graph
algebra as a groupoid $C^*$-algebra as in the paper by Kumjian,
Pask, Raeburn and Renault, \cite{KPRR}, where $C^*$-algebras of
infinite graphs were first introduced.

The KMS states for quite
general cocycle actions on the $C^*$-algebra of an \'etale groupoid were
described by Neshveyev in \cite{N}, extending the work by Renault in
\cite{Re1}, and the key result in the present work is a partial
extension to weights of the results of Neshveyev. Specifically, we consider a
second countable locally compact Hausdorff \'etale groupoid $\mathcal
G$ and two continuous homomorphisms (sometimes called cocycles), $c : \mathcal G \to \mathbb R$ and
$c_0 : \mathcal G \to \mathbb R$. Such homomorphisms induce continuous one-parameter
groups, $\sigma^c$ and $\sigma^{c_0}$, of automorphisms on the reduced groupoid $C^*$-algebra $C_r^*(\mathcal G)$ by a
canonical construction, \cite{Re1}. Under the assumption that $\ker c_0$ has
trivial isotropy groups and is open in $\mathcal G$, it is shown in
Theorem \ref{e1} below 
that there is a bijective correspondence between proper
$\sigma^{c_0}$-invariant KMS
weights for $\sigma^c$ and regular Borel measures on the unit space $\mathcal G^{(0)}$ of $\mathcal G$ that satisfy a
certain conformality condition introduced by Renault in \cite{Re1}. The KMS
weight $\varphi_m$ corresponding to such a measure $m$ is defined by
the expression
$$
\varphi_m(a) = \int_{\mathcal G^{(0)}} P(a) \ dm,
$$
where $P : C^*_r(\mathcal G) \to C_0\left(\mathcal G^{(0)}\right)$ is
the canonical conditional expectation. The proof of this, as well as
the entire approach to KMS weights, builds
on the theory developed with locally compact quantum
groups in mind, by Kustermans and Vaes in \cite{Ku}, \cite{KV2} and \cite{KV2}.

The result is then applied in a relatively straightforward way to the Renault, Deaconu, Anantharaman-Delaroche groupoid,
\cite{Re1},\cite{De}, \cite{An}, or RDA-groupoid for short, arising
from a local homeomorphism on a locally compact second countable
Hausdorff space, giving a bijective correspondence between measures and
gauge-invariant KMS weights for a general cocycle action. Since the
graph algebra of a countable row-finite graph without sinks is the
RDA-groupoid of the shift on the locally compact Hausdorff space of
infinite paths in the graph, we can then subsequently specialize to
graph algebras. 

Let $G$ be a countable row-finite graph without sinks. 
We show in
Theorem \ref{THM2} below that for any $\beta  \in \mathbb R$ there is
a bijective correspondence between the $\beta$-KMS weights on $C^*(G)$
and positive $e^{\beta}$-eigenvectors for the adjacency matrix of
$G$. The $\beta$-KMS states correspond to such eigenvectors whose
coordinates sum to 1; a conclusion which has also recently been
obtained by Toke M. Carlsen and Nadia Larsen, \cite{Ca}, by a different method. Thus the search for the KMS weights boils
down to an interesting eigenvalue problem for possibly infinite
non-negative matrices, the solution of which is described for cofinal
matrices in the accompanying paper \cite{Th3}. For irreducible
matrices much of the story on the solutions was already known or could
be derived from known results, some
parts from the results of Pruitt, \cite{P}, and Vere-Jones, \cite{V},
and other parts from the theory of countable state Markov chains,
cf. e.g. \cite{Wo}. It turns out that the set of $\beta$-values for
which there is a $\beta$-KMS weight for the gauge action first of all
depends on what we here call the non-wandering part of $G$, by which
we mean the set $NW_G$ of vertexes that are contained in a loop in the
graph. When the graph is cofinal these vertexes and the edges they
emit constitute an irreducible sub-graph of $G$, and we let $\beta_0$
be the exponential growth rate of the number of loops based at a vertex
in $NW_G$. This does not depend on the vertex and it can be any element
of $[0,\infty]$. Our main result concerning KMS weights on graph
algebras is the following.     

\begin{thm}\label{betavalues(intro)}  Assume that
  $G$ is cofinal and let $\beta \in \mathbb R$.
\begin{enumerate}
\item[1)] Assume that the non-wandering part $NW_G$ is empty. There is
  a $\beta$-KMS weight for the gauge action on $C^*(G)$ for all $\beta \in \mathbb R$. 
\item[2)]  Assume that the non-wandering part $NW_G$ is non-empty and
  finite. There is a $\beta$-KMS weight for the gauge action on
  $C^*(G)$ if and only if $\beta = \beta_0$.
\item[3)]  Assume that the non-wandering part $NW_G$ is non-empty and
  infinite. There is a $\beta$-KMS weight for the gauge action on
  $C^*(G)$ if and only if $\beta \geq \beta_0$.
\end{enumerate}
\end{thm}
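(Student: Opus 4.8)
The plan is to turn the statement into an eigenvalue problem via \refthm{THM2} and then to solve that problem using the structure that cofinality forces on $G$, together with Perron--Frobenius theory when $NW_G$ is finite and the Vere-Jones--Pruitt theory of irreducible non-negative matrices when $NW_G$ is infinite. By \refthm{THM2} a $\beta$-KMS weight for the gauge action exists precisely when the adjacency matrix $A$ of $G$ has a positive eigenvector $\psi$ with $A\psi = e^{\beta}\psi$; writing $B$ for the adjacency matrix of the subgraph supported on $NW_G$, the number $e^{\beta_0}$ is by definition the spectral radius $\rho(B)=\lim_n (B^n)_{uu}^{1/n}$ at any $u\in NW_G$.

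Assume first that $NW_G\neq\emptyset$, so that $NW_G$ is a single irreducible subgraph. I would record two structural facts. Every vertex of $G$ reaches $NW_G$: repeating a loop at some $u_0\in NW_G$ yields an infinite path, which each vertex reaches by cofinality, necessarily at a vertex of $NW_G$. And no vertex of $NW_G$ emits an edge leaving $NW_G$: such an edge to $w\notin NW_G$ could be completed, using that $w$ reaches $NW_G$ and that $NW_G$ is irreducible, to a loop through $w$, contradicting $w\notin NW_G$. Hence $NW_G$ is absorbing, the complement $G\setminus NW_G$ contains no infinite path (an infinite path there could not be reached from $NW_G$, violating cofinality), and by K\"onig's lemma and row-finiteness every path issuing from a vertex of $G\setminus NW_G$ has bounded length.

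These facts make the transient part contribute no constraint. In the block decomposition of $A$ along $G\setminus NW_G$ and $NW_G$ the block from $NW_G$ back to $G\setminus NW_G$ vanishes, so any positive eigenvector of $A$ restricts on $NW_G$ to a positive eigenvector of $B$ with the same eigenvalue, and its restriction cannot vanish there, since an eigenvector of the locally nilpotent transient block for a nonzero eigenvalue is impossible. Conversely, given a positive eigenvector $\eta$ of $B$ for $e^{\beta}$, the values on $G\setminus NW_G$ are forced by $\psi(v)=e^{-\beta}\sum_{s(e)=v}\psi(r(e))$, and the boundedness of transient paths turns the resulting Neumann series $\sum_{n\ge 0} e^{-(n+1)\beta}\,T^{n}S\,\eta$ into a finite, strictly positive sum, where $T$ and $S$ denote the transient block and the block of edges from $G\setminus NW_G$ into $NW_G$. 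Thus, for $NW_G\neq\emptyset$, a $\beta$-KMS weight exists if and only if $B$ has a positive $e^{\beta}$-eigenvector.

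It then remains to solve this eigenvalue problem for the irreducible matrix $B$, and to treat the loopless case by hand. In case 2) the matrix $B$ is finite and irreducible, so Perron--Frobenius yields a positive eigenvector exactly for the eigenvalue $\rho(B)=e^{\beta_0}$, giving $\beta=\beta_0$. In case 3) the matrix $B$ is infinite and irreducible, and the results of Vere-Jones, \cite{V}, and Pruitt, \cite{P}, provide positive eigenvectors precisely for the eigenvalues in $[\rho(B),\infty)$, that is for $\beta\ge\beta_0$. In case 1), where $NW_G=\emptyset$ and $G$ is loopless, the reduction above does not apply; here I would construct a positive solution of $A\psi=e^{\beta}\psi$ for every $\beta$ directly, propagating chosen values through the cofinal path structure. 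I expect the decisive difficulty to be the endpoint $\beta=\beta_0$ in case 3): showing that the infinite irreducible $B$ carries a genuine (not merely subinvariant) positive eigenvector at its spectral radius is exactly where the recurrence/transience dichotomy enters, and the loopless case 1) is delicate for the opposite reason, that $\rho$ carries no information there. For both of these points I would lean on the systematic treatment of eigenvectors of cofinal non-negative matrices in the accompanying paper \cite{Th3}.
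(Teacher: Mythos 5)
Your proposal is correct, and at the top level it coincides with the paper's proof: Theorem~\ref{THM2} converts the statement into the question of which $\beta$ admit a positive $e^{\beta}$-eigenvector of the adjacency matrix. The difference lies in what happens next. The paper's proof of this theorem (stated again as Theorem~\ref{betavalues}) is a one-line citation: it combines Theorem~\ref{THM2} with Theorem 2.7 of the accompanying paper \cite{Th3} (the proof actually writes \cite{Th2}, apparently a typo), where the eigenvalue problem for cofinal non-negative matrices is solved. You instead reconstruct the core of that analysis, and your structural lemmas are sound: cofinality plus a loop in $NW_G$ shows every vertex reaches $NW_G$; hence no edge can leave $NW_G$ (such an edge would close up into a loop through its range vertex); hence the complement of $NW_G$ carries no infinite path, and K\"onig's lemma with row-finiteness bounds the lengths of paths staying in the complement. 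Your block decomposition then correctly shows the transient part imposes no constraint: the restriction of a positive eigenvector to $NW_G$ cannot vanish (local nilpotency of the transient block $T$ forces it), and the finite Neumann series $\sum_{n\ge 0} e^{-(n+1)\beta}T^nS\eta$ extends any positive eigenvector $\eta$ of the core, strictly positively because every complement vertex reaches $NW_G$. This reduces cases 2) and 3) to the irreducible core, where Perron--Frobenius and the Vere-Jones/Pruitt/Woess theory \cite{V}, \cite{P}, \cite{Wo} give exactly the dichotomy $\beta=\beta_0$ versus $\beta\ge\beta_0$; the genuinely hard points --- existence at the endpoint $\beta=\beta_0$ in the transient infinite case, and case 1) where no reduction is available --- you correctly flag and defer to \cite{Th3}, which is precisely what the paper does for the entire eigenvalue problem. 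So nothing is missing relative to the paper's own level of rigor, and your route buys something the paper's citation hides: a transparent explanation of why finiteness of $NW_G$ pins $\beta$ to $\beta_0$ (finite irreducible matrices have positive eigenvectors only at the Perron root) while infiniteness opens up the half-line (infinite irreducible ones also have them above the convergence norm).
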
 

By combining the results from \cite{Th3} with those of this paper, it is
also possible to obtain a description of the
corresponding KMS weights, at least in principle.

In Section \ref{k40} we study the $\Gamma$-invariant of Connes,
\cite{C1}, for the factor
$\pi_{\psi}\left(C^*(G)\right)''$ generated by the
GNS-representation $\pi_{\psi}$ of an extremal $\beta$-KMS weight $\psi$. When
$G$ is cofinal with uniformly bounded out-degree we show that there
are natural numbers, $d'_G$ and $d_G$, both generalizing the period of a
finite irreducible graph, such that
$$
\mathbb Zd'_G \beta \subseteq \Gamma\left(\pi_{\psi}(C^*(G))''\right)
\subseteq \mathbb Zd_G\beta .
$$ 
In some cases the numbers $d'_G$ and $d_G$ are the same, but not in
general. We leave it as an open problem to find a method
to determine the $\Gamma$-invariant when they differ.

We conclude the paper with three examples to illustrate the results,
and in particular demonstrate the significance of considering weights,
and not only states, in relation to the KMS condition.

\section{Measures and KMS weights on groupoid $C^*$-algebras}

Let $A$ be a $C^*$-algebra and $A_{+}$ the convex cone of positive
elements in $A$. A \emph{weight} on $A$ is map $\psi : A_{+} \to
[0,\infty]$ with the properties that $\psi(a+b) = \psi(a) + \psi(b)$
and $\psi(\lambda a) = \lambda \psi(a)$ for all $a, b \in A_{+}$ and all
$\lambda \in \mathbb R, \ \lambda > 0$. By definition $\psi$ is
\emph{densely defined} when $\left\{ a\in A_{+} : \ \psi(a) <
  \infty\right\}$ is dense in $A_{+}$ and \emph{lower semi-continuous}
when $\left\{ a \in A_{+} : \ \psi(a) \leq \alpha \right\}$ is closed
for all $\alpha \geq 0$. We will use \cite{Ku}, \cite{KV1} and \cite{KV2} as our source for
information on weights, and as in \cite{KV2} we say that a weight is \emph{proper}
when it is non-zero, densely defined and lower semi-continuous. 

Let $\psi$ be a proper weight on $A$. Set $\mathcal N_{\psi} = \left\{ a \in A: \ \psi(a^*a) < \infty
\right\}$ and note that 
\begin{equation*}\label{f3}
\mathcal N_{\psi}^*\mathcal N_{\psi} = \Span \left\{ a^*b : \ a,b \in
  \mathcal N_{\psi} \right\}
\end{equation*} 
is a dense
$*$-subalgebra of $A$, and that there is a unique well-defined linear
map $\mathcal N_{\psi}^*\mathcal N_{\psi} \to \mathbb C$ which
extends $\psi : \mathcal N_{\psi}^*\mathcal N_{\psi} \cap A_+ \to
[0,\infty)$. We denote also this densely defined linear map by $\psi$.

Let $\alpha : \mathbb R \to \Aut A$ be a point-wise
norm-continuous one-parameter group of automorphisms on
$A$. Let $\beta \in \mathbb R$. Following \cite{C} we say that a proper weight
$\psi$ on $A$ is a \emph{$\beta$-KMS
  weight} for $\alpha$ when
\begin{enumerate}
\item[i)] $\psi \circ \alpha_t = \psi$ for all $t \in \mathbb R$, and
\item[ii)] for every pair $a,b \in \mathcal N_{\psi} \cap \mathcal
  N_{\psi}^*$ there is a continuous and bounded function $F$ defined on
  the closed strip $D_{\beta}$ in $\mathbb C$ consisting of the numbers $z \in \mathbb C$
  whose imaginary part lies between $0$ and $\beta$, and is
  holomorphic in the interior of the strip and satisfies that
$$
F(t) = \psi(a\alpha_t(b)), \ F(t+i\beta) = \psi(\alpha_t(b)a)
$$
for all $t \in \mathbb R$. \footnote{Note that we apply the
  definition from \cite{C} for the action $\alpha_{-t}$
  in order to use the same sign convention as in \cite{BR}, for example.}
\end{enumerate}   
A $\beta$-KMS weight $\psi$ with the property that 
$$
\sup \left\{ \psi(a) : \ 0 \leq a \leq 1 \right\} = 1
$$
will be called a \emph{$\beta$-KMS state}. This is consistent with the
standard
definition of KMS states, \cite{BR}, except when $\beta = 0$ in which
case our definition requires also that a $0$-KMS state, which is also a trace state, is $\alpha$-invariant.

In this section we investigate KMS weights for a specific class of
one-parameter groups of automorphisms on the $C^*$-algebra of an
\'etale groupoid. To introduce these algebras, let $\mathcal G$ be an \'etale second countable locally compact Hausdorff groupoid with unit
space $\mathcal G^{(0)}$. Let $r : \mathcal G \to \mathcal G^{(0)}$ and $s : \mathcal G \to \mathcal G^{(0)}$ be
the range and source maps, respectively. For $x \in \mathcal G^{(0)}$ put $\mathcal G^x = r^{-1}(x), \ \mathcal G_x = s^{-1}(x) \ \text{and} \ \mathcal G^x_x =
s^{-1}(x)\cap r^{-1}(x)$. Note that $\mathcal G^x_x$ is a group, the \emph{isotropy group} at $x$. The space $C_c(\mathcal G)$ of continuous compactly supported
functions is a $*$-algebra when the product is defined by
\begin{equation*}\label{k15}
(f_1 * f_2)(g) = \sum_{h \in \mathcal G^{r(g)}} f_1(h)f_2(h^{-1}g)
\end{equation*}
and the involution by $f^*(g) = \overline{f\left(g^{-1}\right)}$.
To define the \emph{reduced groupoid
  $C^*$-algebra} $C^*_r(\mathcal G)$, let $x\in \mathcal G^{(0)}$. There is a
representation $\pi_x$ of $C_c(\mathcal G)$ on the Hilbert space $l^2(\mathcal G_x)$ of
square-summable functions on $\mathcal G_x$ given by 
\begin{equation*}\label{pix}
\pi_x(f)\psi(g) = \sum_{h \in \mathcal G^{r(g)}} f(h)\psi(h^{-1}g) .
\end{equation*}
$C^*_r(\mathcal G)$ is the completion of $C_c(\mathcal G)$ with respect to the norm
$$
\left\|f\right\|_r = \sup_{x \in \mathcal G^{(0)}} \left\|\pi_x(f)\right\| .
$$  
Note that $C^*_r(\mathcal G)$ is separable since we assume that the
topology of $\mathcal G$ is second countable.

The map $C_c(\mathcal G) \to C_c\left(\mathcal G^{(0)}\right)$ which
restricts functions to $\mathcal G^{(0)}$ extends to a conditional expectation $P
: C^*_r(\mathcal G) \to C_0\left(\mathcal G^{(0)}\right)$. Via $P$ a
regular Borel measure $m$ on $\mathcal G^{(0)}$ gives rise to a weight
$\varphi_m : C^*_r(\mathcal G)_+ \to [0,\infty]$ defined by the
formula
\begin{equation*}\label{f1}
\varphi_m(a) = \int_{\mathcal G^{(0)}} P(a) \ dm .
\end{equation*}
It follows from Fatou's lemma that $\varphi_{m}$ is lower
semi-continuous. Since $\varphi_{m}(f a f) < \infty$ for every
non-negative function $f$ in $C_c(\mathcal G^{(0)})$, it follows that $\varphi_m$
is also densely defined, i.e. $\varphi_m$ is a proper weight on
$C^*_r(\mathcal G)$.\footnote{We consider only non-zero measures.}

Let $c : \mathcal G \to \mathbb R$ be a continuous homomorphism, i.e. $c$ is
continuous and $c(gh) = c(g) + c(h)$ when $s(g) = r(h)$. For each $t
\in \mathbb R$ we can then define an automorphism $\sigma_t^c$ of
$C_c(\mathcal G)$ such that
\begin{equation}\label{b312}
\sigma^c_t(f)(g) = e^{itc(g)}f(g) .
\end{equation} 
For each $x \in \mathcal G^{(0)}$ the same expression defines a unitary $u_t$ on
$l^2(\mathcal G_x)$ such that $u_t\pi_x(f)u_t^* =
\pi_x\left(\sigma^c_t(f)\right)$ and it follows therefore that
$\sigma^c_t$ extends by continuity to an automorphism $\sigma^c_t$ of
$C^*_r\left(\mathcal G\right)$. It is easy to see that $\sigma^c =
\left(\sigma^c_t\right)_{t \in \mathbb R}$ is a continuous
one-parameter group of automorphisms on
$C^*_r\left(\mathcal G\right)$. The $*$-subalgebra $C_c(\mathcal G)$ of $C^*_r(\mathcal G)$ consists of
elements that are analytic for $\sigma^c$, cf. \cite{BR}. Since $P(C_c(\mathcal G))
\subseteq C_c\left(\mathcal G^{(0)}\right)$ we see that $C_c(\mathcal G) \subseteq \mathcal N_{\varphi_m}^*\mathcal
N_{\varphi_m}$ for every regular Borel measure $m$ on $\mathcal G^{(0)}$. Let $\beta \in \mathbb R$. As in \cite{Th3} we say that $m$ is   
 \emph{$(\mathcal G,c)$-conformal with exponent $\beta$}
when
\begin{equation}\label{c31} 
m(s(W)) = \int_{r(W)} e^{\beta c(r_W^{-1}(x))} \ dm(x)
\end{equation}
for every open bi-section $W \subseteq \mathcal G$, where $r_W^{-1}$
denotes the inverse of $r: W \to r(W)$. This terminology is motivated
by the resemblance with the notion of conformality for measures used
for dynamical systems, cf. \cite{DU}. In certain cases the notions
actually coincide, cf. Lemma \ref{d80} below.

The relation between $(\mathcal G,c)$-conformality and KMS weights is given by the following

\begin{prop}\label{c11}
Let $m$ be a regular Borel measure on $\mathcal
  G^{(0)}$. The following are equivalent: 
\begin{enumerate}
\item[1)] $m$ is
  $(\mathcal G,c)$-conformal with exponent $\beta$.
\item[2)] \begin{equation*}\label{c9}
\varphi_{m}(fg) = \varphi_{m}\left(g\sigma^c_{i\beta}(f)\right)
\end{equation*} 
for all $f,g \in C_c(\mathcal G)$.
\item[3)] $\varphi_{m}$ is a $\beta$-KMS weight for $\sigma^c$.
\end{enumerate}
\end{prop}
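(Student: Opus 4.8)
The plan is to prove the cycle of implications $1)\Rightarrow 2)\Rightarrow 3)\Rightarrow 1)$, since this keeps each step at the level of abstraction where it is most natural. The conformality condition (\ref{c31}) is a statement about the measure $m$ and bisections, condition $2)$ is an algebraic identity on $C_c(\mathcal G)$, and condition $3)$ is the full analytic KMS property; moving from geometry to algebra to analysis and back closes the loop with the least redundancy.

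\smallskip

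For $1)\Rightarrow 2)$, I would first reduce to the case where $f$ and $g$ are supported in open bisections. This is the crucial simplification: because $\mathcal G$ is \'etale, every point has a neighbourhood basis of open bisections, so by a partition-of-unity argument any element of $C_c(\mathcal G)$ is a finite sum of functions supported in bisections, and both sides of the identity in $2)$ are bilinear in $(f,g)$. Once $f = f_W$ and $g = g_V$ are supported in bisections $W$ and $V$, the convolution product $f*g$ is again supported in the bisection $WV$, and the conditional expectation $P(f*g)$ is the restriction to $\mathcal G^{(0)}$, which is nonzero only on $r(W)\cap s(V)^{-1}$-type overlaps where $W$ and $V$ compose back to a unit. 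The left side $\varphi_m(fg)=\int_{\mathcal G^{(0)}} P(f*g)\,dm$ then becomes an explicit integral, and applying (\ref{c31}) to the relevant bisection converts the integral over $s(W)$ into an integral over $r(W)$ weighted by $e^{\beta c}$. Comparing with the right side, where $\sigma^c_{i\beta}(f)(g) = e^{-\beta c(g)}f(g)$ by analytic continuation of (\ref{b312}), the weight $e^{\beta c}$ produced by conformality should match exactly the factor $e^{-\beta c}$ coming from $\sigma^c_{i\beta}$ together with the swap of range and source. Keeping the bookkeeping of $c(g)$ versus $c(g^{-1}) = -c(g)$ straight across the involution and the product is where errors hide, so I would carry the computation on a single bisection carefully.

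\smallskip

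For $2)\Rightarrow 3)$, the point is that the algebraic identity in $2)$, valid on the analytic elements $C_c(\mathcal G)$, is exactly the KMS condition evaluated at the boundary of the strip, and $C_c(\mathcal G)\subseteq \mathcal N_{\varphi_m}^*\mathcal N_{\varphi_m}$ as already noted in the excerpt. For $a,b\in C_c(\mathcal G)$ I would define $F(z) = \varphi_m(a\,\sigma^c_z(b))$, which is entire because $b$ is analytic and $\varphi_m$ is continuous on the relevant subalgebra; its boundary values at $t$ and $t+i\beta$ are computed directly from $2)$ and the $\sigma^c$-invariance of $\varphi_m$ (which follows since $P\circ\sigma^c_t = P$ and $m$ is a fixed measure). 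The genuine work is the passage from the dense analytic subalgebra to all of $\mathcal N_{\varphi_m}\cap\mathcal N_{\varphi_m}^*$: one must approximate general KMS-algebra elements by elements of $C_c(\mathcal G)$ and control the holomorphic functions $F$ uniformly on the strip. Here I would invoke the machinery of Kustermans and Vaes (\cite{Ku}, \cite{KV1}, \cite{KV2}) cited in the excerpt, which provides precisely the criterion that it suffices to verify the KMS identity on a $\sigma^c$-invariant core of analytic elements; I expect this to be the main obstacle, since the standard bounded-function/Phragm\'en--Lindel\"of estimates that make KMS states manageable must be replaced by their weight-theoretic analogues.

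\smallskip

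For $3)\Rightarrow 1)$, I would run the argument of $1)\Rightarrow 2)$ in reverse: a $\beta$-KMS weight satisfies $2)$ on $C_c(\mathcal G)$ as a special case of its defining boundary condition, and then testing $2)$ against functions $f,g$ supported in a single open bisection $W$ and approximating indicator functions of $s(W)$ and $r(W)$ recovers (\ref{c31}). The only subtlety is that (\ref{c31}) is an identity of measures of Borel sets while $2)$ is phrased with continuous functions, so I would use regularity of $m$ and an approximation of the characteristic functions of the relevant sets by elements of $C_c$, letting the supports shrink to extract the pointwise conformality relation. This direction is essentially a localization of the forward computation and should present no difficulty beyond the measure-theoretic approximation already implicit in the étale structure.
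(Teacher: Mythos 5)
Your overall architecture is sound, and two of the three legs essentially coincide with the paper's proof: the implication 1) $\Rightarrow$ 2) is the Renault--Neshveyev computation (the paper cites page 114 of \cite{Re1} and the proof of Theorem 1.3 in \cite{N} rather than repeating it, but your bisection-by-bisection reduction is exactly that calculation), and your step 3) $\Rightarrow$ 1) decomposes into 3) $\Rightarrow$ 2), which works ``as for states'' via the proof of Proposition 5.3.7 in \cite{BR} because $C_c(\mathcal G)\subseteq \mathcal N_{\varphi_m}\cap\mathcal N_{\varphi_m}^*$ and $\varphi_m$ is finite there, followed by the reverse of the Renault computation. The gap is in 2) $\Rightarrow$ 3), and it sits exactly at the point you yourself flag as ``the main obstacle.''

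You propose to close that obstacle by invoking a criterion from \cite{Ku}, \cite{KV1}, \cite{KV2} to the effect that it suffices to verify the KMS identity on a $\sigma^c$-invariant core of analytic elements. No such criterion exists in those references in the form you need: the characterizations there (the identity $\varphi(a^*a)=\varphi\left(\sigma^c_{i\beta/2}(a)\sigma^c_{i\beta/2}(a)^*\right)$, or the Combes strip condition) all quantify over the full domain $D(\sigma^c_{i\beta/2})$, respectively over all of $\mathcal N_{\varphi}\cap\mathcal N_{\varphi}^*$, and because a weight is merely lower semicontinuous --- not continuous, as a state is --- the passage from a dense invariant subalgebra to the full domain is not formal. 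That passage is the actual mathematical content of 2) $\Rightarrow$ 3), and the paper does it by hand: for $a,b\in\mathcal N_{\varphi_m}\cap\mathcal N_{\varphi_m}^*$ it chooses cut-offs $g_k\in C_c(\mathcal G^{(0)})$ and approximants $a_n,b_n\in C_c(\mathcal G)$, forms the entire functions $F_{k,n}(z)=\varphi_m\left(g_ka_ng_k\sigma^c_z(b_n)\right)$, and proves that they converge uniformly on the closed strip $D_\beta$. The estimates that make this possible are special to $\varphi_m$: since $\varphi_m = m\circ P$ and $P(cd)\leq P(cc^*)^{\frac{1}{2}}P(d^*d)^{\frac{1}{2}}$, the differences $\left|F_{k,n}(t)-\varphi_m(g_kag_k\sigma^c_t(b))\right|$ are dominated, uniformly in $t\in\mathbb R$, by integrals against $m$ that vanish as $n\to\infty$, cf. (\ref{f7}) and (\ref{f8}); Hadamard's three-lines theorem (Proposition 5.3.5 in \cite{BR}) then upgrades this boundary convergence to uniform convergence on $D_\beta$, and a dominated-convergence argument removes the cut-off $g_k$. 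In other words, the proof exploits that $\varphi_m$ factors through the conditional expectation $P$ --- structural information that no abstract ``core criterion'' could see. As it stands, your proposal replaces the heart of the proof with a citation to a theorem that is not there; to repair it you would have to supply this approximation argument, or an equivalent substitute, yourself.
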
 
\begin{proof} The equivalence of 1) and 2) follows from a calculation first performed by
  Renault on page 114 in \cite{Re1} and later extended by Neshveyev in
  the proof of Theorem 1.3 in \cite{N}. We will not repeat it here.

2) $\Rightarrow$ 3): Consider elements $a,b \in \mathcal
N_{\varphi_{m}} \cap \mathcal N_{\varphi_{m}}^*$. Let $\{g_k\}$ be
an approximate unit for $C^*_r(\mathcal G)$ consisting of elements
from $C_c(\mathcal G^{(0)})$ and choose
sequences $\{a_n\},\{b_n\}$ in $C_c(\mathcal G)$ converging to $a$ and $b$,
respectively. For $k,n \in \mathbb N$ consider the entire function 
$$
F_{k,n}(z) = \varphi_{m}\left(g_ka_n g_k\sigma_z^c(b_n)\right) .
$$ 
Note that by assumption $F_{k,n}(t + i \beta) = \varphi_{m}\left(g_ka_n
  \sigma_{i\beta}^c\left(\sigma^c_t(g_kb_n)\right)\right) =
\varphi_{m}\left(g_k \sigma^c_t(b_n)g_ka_n\right)$. Since $P(cd) \leq P(cc^*)^{\frac{1}{2}}P(d^*d)^{\frac{1}{2}}$ for all $c,d
\in C^*_r(\mathcal G)$ and $P \circ \sigma^c_t = P$, it follows that
\begin{equation*}\label{c12}
\begin{split}
& \sup_{t \in \mathbb R} \left|F_{k,n}(t) -
  \varphi_{m}(g_kag_k\sigma^c_t(b))\right| \\
&\leq \sup_{t \in \mathbb
  R} \left(\left|\varphi_{m}\left( g_k(a_n-a)g_k
      \sigma^c_t(b_n)\right)\right| + \left|\varphi_{m}(g_kag_k
    \sigma^c_t(b_n-b))\right|\right)\\
& \leq \int_{\mathcal G^{(0)}} g_k \left(P((a_n-a)(a_n-a)^*)^{\frac{1}{2}} P(b_n^*g_k^2
b_n)^{\frac{1}{2}} + P(aa^*)^{\frac{1}{2}}P\left((b_n-b)^*g_k^2
  (b_n-b)\right)^{\frac{1}{2}}\right) \ dm .
\end{split} 
\end{equation*}
It follows that 
\begin{equation}\label{f7}
\lim_{n \to \infty} \sup_{t \in \mathbb R} \left|F_{k,n}(t) -
  \varphi_{m}(g_kag_k\sigma^c_t(b))\right| = 0. 
\end{equation}
Similar estimates
show that 
\begin{equation}\label{f8}
\lim_{n \to \infty} \sup_{t \in \mathbb R} \left|F_{k,n}(t+i\beta ) -
  \varphi_{m}(g_k\sigma^c_t(b)g_ka)\right| = 0.
\end{equation}
For each $k,n$ the function $F_{k,n}$ is bounded by
$$
\sup_{ |s| \leq |\beta|} \sqrt{\varphi_m\left(g_ka_na_n^*g_k\right)
  \varphi_m\left(\sigma^c_{is}(b_n)^*\sigma^c_{is}(b_n)\right)} 
$$
on the closed strip $D_{\beta}$ in
$\mathbb C$ where the imaginary part is between $0$ and $\beta$. We
can therefore use Hadamard's three-lines theorem, Proposition 5.3.5 in
\cite{BR}, to conclude from
(\ref{f7}) and (\ref{f8}) that the sequence
$\left\{F_{k,n}\right\}_{n=1}^{\infty}$ is Cauchy in the supremum norm
on $D_{\beta}$ and therefore
converges uniformly, as $n$ tends to $\infty$, to a continuous and bounded
function $F_k : D_{\beta} \to \mathbb C$ which is holomorphic in the
interior of $D_{\beta}$ and satisfies that
$$
F_k(t) = \varphi_{m}(g_kag_k\sigma^c_t(b)) \ \text{and} \
F_k(t+i\beta) = \varphi_{m}(g_k\sigma^c_t(b)g_ka)
$$
for all $t \in \mathbb R$. Now note that
\begin{equation*}\label{c16}
\begin{split}
& \left|F_k(t) - \varphi_{m} (a\sigma^c_t(b))\right| \leq
\left|\varphi_{m}((g_ka-a)g_k\sigma^c_t(b))\right| +
\left|\varphi_{m}(a(g_k\sigma^c_t(b) -\sigma^c_t(b))\right|\\
& \leq \int_{\mathcal G^{(0)}} \left|g_k-1\right| P(aa^*)^{\frac{1}{2}}
P(b^*b)^{\frac{1}{2}} + P(aa^*)^{\frac{1}{2}}
P(b^*(g_k-1)^2b)^{\frac{1}{2}} \ dm .
\end{split}
\end{equation*}
The integrand is dominated by
$2P(aa^*)^{\frac{1}{2}}P(b^*b)^{\frac{1}{2}}$ which is in
$L^1(m)$ because $a$ and $b$ are both in $\mathcal N_{\varphi_{m}}
\cap \mathcal N_{\varphi_{m}}^*$. We can therefore apply Lebesgue's
dominated convergence theorem to conclude that 
$$
\lim_{k \to \infty} \sup_{t \in \mathbb R} \left|F_k(t) -
  \varphi_{m} (a\sigma^c_t(b))\right| = 0.
$$
Similar arguments show that
$$
\lim_{k \to \infty} \sup_{t \in \mathbb R} \left|F_k(t+i\beta) -
  \varphi_{m} (\sigma^c_t(b)a)\right| = 0,
$$
and 3) follows then from Hadamard's three-lines theorem as above.

3) $\Rightarrow$ 2): Since $C_c(\mathcal G) \subseteq \mathcal
N_{\varphi_{m}} \cap \mathcal N_{\varphi_{m}}^*$ this
implication is obtained exactly as for states. See the proof of (3)
$\Rightarrow$ (1) in Proposition 5.3.7 of \cite{BR}.

\end{proof}

The key result of the paper is the following theorem. It has a
predecessor in Proposition 5.4 of \cite{Re1}, but the two results are
not directly comparable because the definitions of KMS weights are not
the same. In particular, a major problem overcome in the following
proof is that with the present definition, KMS weights are not a priori finite on $C_c(\mathcal G)$.

\begin{thm}\label{e1} Let $c_0 : \mathcal G \to \mathbb
R$ and $c : \mathcal G \to \mathbb R$ be continuous homomorphisms. Assume that 
\begin{enumerate}
 \item[a)] $\ker c_0 = \left\{ g \in \mathcal G : \ c_0(g) = 0\right\}$ is an
  equivalence relation, i.e. $\mathcal G^x_x \cap \ker c_0 = \{x\}$ for all $x
  \in \mathcal G^{(0)}$, and 
\item[b)] $\ker c_0$ is open in $\mathcal G$.
 \end{enumerate}
Let $\psi$ be a $\beta$-KMS weight for
  the action $\sigma^{c}$ on $C^*_r(\mathcal G)$. Assume that $\psi$ is invariant
  under $\sigma^{c_0}$ in the sense that $\psi \circ \sigma^{c_0}_t  =
  \psi$ for all $t \in \mathbb R$. 

There is a regular Borel measure $m$ on $\mathcal G^{(0)}$ such that $\psi =
  \varphi_{m}$.
\end{thm}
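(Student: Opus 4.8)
The plan is to recover $m$ from the restriction of $\psi$ to the diagonal and then to prove $\psi=\varphi_m$ by showing that $\psi$ ``sees only'' the unit space. First I would restrict $\psi$ to the abelian $C^*$-subalgebra $C_0(\mathcal G^{(0)})\subseteq C^*_r(\mathcal G)$, embedded through the $*$-homomorphism $C_c(\mathcal G^{(0)})\hookrightarrow C_c(\mathcal G)$ (units multiply diagonally). This restriction is again a lower semi-continuous weight on a commutative $C^*$-algebra, so by the Riesz representation theorem it is integration against a regular Borel measure $m$ on $\mathcal G^{(0)}$, i.e. $\psi(h)=\int_{\mathcal G^{(0)}}h\,dm$ for $h\in C_0(\mathcal G^{(0)})_+$. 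Showing that this restriction is non-degenerate enough that $m$ is Radon, equivalently that $\psi$ is finite on $C_c(\mathcal G^{(0)})_+$, is already delicate, since a priori $\psi$ need not be finite anywhere on $C_c(\mathcal G)$; I would extract from the density of $\mathcal N_{\psi}$ together with lower semi-continuity and $\sigma^{c_0}$-invariance a supply of localisers $g\in C_c(\mathcal G^{(0)})_+\cap\mathcal N_{\psi}$, to be used exactly as the approximate unit $\{g_k\}$ is used in the proof of Proposition \ref{c11}.

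Next I would use the invariance to reduce to $\ker c_0$. Since $c_0$ is continuous, $\ker c_0=c_0^{-1}(0)$ is closed, and by hypothesis b) it is open; hence $\ker c_0$ is clopen and $1_{\ker c_0}$ is a continuous (locally constant) function, so $f\mapsto f\cdot 1_{\ker c_0}$ maps $C_c(\mathcal G)$ into $C_c(\ker c_0)$ and extends to a conditional expectation $\Phi\colon C^*_r(\mathcal G)\to C^*_r(\ker c_0)$. For $f\in C_c(\mathcal G)$ the averages $A_T(f)=\frac{1}{2T}\int_{-T}^{T}\sigma^{c_0}_t(f)\,dt$ satisfy $A_T(f)(g)=f(g)\frac{\sin(Tc_0(g))}{Tc_0(g)}$, and because $\supp f$ is compact while $\ker c_0$ is clopen, $|c_0|$ is bounded away from $0$ on the compact set $\supp f\setminus\ker c_0$; hence $A_T(f)\to\Phi(f)$ in norm. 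Combining $\psi\circ\sigma^{c_0}_t=\psi$ with lower semi-continuity, I would deduce that $\psi$ annihilates the part supported off $\ker c_0$, and since $\mathcal G^{(0)}\subseteq\ker c_0$ gives $P\circ\Phi=P$, this reduces the identity $\psi=\varphi_m$ to the principal groupoid $\ker c_0$, whose isotropy is trivial by hypothesis a).

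On $\ker c_0$ the decisive point is that $g\notin\mathcal G^{(0)}$ forces $r(g)\neq s(g)$. Given $f\in C_c(\ker c_0)$ supported off $\mathcal G^{(0)}$, a partition of unity writes $f$ as a finite sum of functions supported in open bisections $W$ with $\overline{r(W)}\cap\overline{s(W)}=\emptyset$, using Hausdorffness and étaleness. For such a piece I choose $u\in C_c(\mathcal G^{(0)})_+$ with $u\equiv 1$ on $r(\supp f)$ and $u\equiv 0$ on $s(\supp f)$, so that $u*f=f$ and $f*u=0$, while $\sigma^c_{i\beta}(u)=u$ because $c$ vanishes on units. The $\beta$-KMS condition for $\sigma^c$, in the form $\psi(u*f)=\psi(f*\sigma^c_{i\beta}(u))$ valid for analytic $u$ (this is the implication 3)$\Rightarrow$2) of Proposition \ref{c11}), then gives $\psi(f)=\psi(u*f)=\psi(f*u)=0$. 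This is the Renault--Neshveyev bisection computation run for a KMS weight; together with $\psi|_{C_0(\mathcal G^{(0)})}=\varphi_m$ and a bisection decomposition it yields $\psi(f)=\int_{\mathcal G^{(0)}}P(f)\,dm$ for every $f\in C_c(\ker c_0)$ in the domain of $\psi$, hence for every $f\in C_c(\mathcal G)$ via the previous paragraph.

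Finally, lower semi-continuity of both $\psi$ and $\varphi_m$, and the fact that $C_c(\mathcal G)$ is a common core, would let me upgrade agreement on $C_c(\mathcal G)$ to the equality $\psi=\varphi_m$ on all of $C^*_r(\mathcal G)_+$. I expect the main obstacle throughout to be precisely the absence of a priori finiteness of $\psi$ on $C_c(\mathcal G)$: each application of the KMS condition in the third paragraph, and each passage to the limit in the second and fourth, requires the relevant elements to lie in $\mathcal N_{\psi}\cap\mathcal N_{\psi}^*$. I would arrange this by sandwiching with the localisers $g_k\in C_c(\mathcal G^{(0)})_+\cap\mathcal N_{\psi}$ and passing to the limit by dominated convergence, reproducing the mechanism already used in the proof of Proposition \ref{c11}. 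Controlling these domain issues, which are invisible for KMS states, is the genuine crux of the argument.
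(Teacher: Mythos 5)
Your skeleton (average into $\ker c_0$, recover $m$ on the diagonal, kill the off-diagonal part with the KMS condition and trivial isotropy) is reasonable, and your averaging operator is essentially the paper's $Q_R$ and $Q$; but the step you yourself identify as the crux is precisely the one your outline does not contain, and it cannot be filled in the way you suggest. Everything in your third paragraph presupposes that $\psi$ is finite on $C_c(\mathcal G^{(0)})_+$: the localisers $u$ must lie in $\mathcal N_{\psi}\cap\mathcal N_{\psi}^*$, and an off-diagonal $f$ supported in a bisection lies in $\mathcal N_{\psi}\cap\mathcal N_{\psi}^*$ exactly because $f^**f$ and $f*f^*$ are elements of $C_c(\mathcal G^{(0)})_+$ --- so finiteness on the diagonal must come first. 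Your proposed source of it (density of $\mathcal N_{\psi}$ plus lower semi-continuity plus invariance) fails for a structural reason: lower semi-continuity points the wrong way. From positive elements $a_n\to g$ with $\psi(a_n)<\infty$ you can only conclude $\psi(g)\leq\liminf_n\psi(a_n)$, which may be infinite; nearness never yields domination, and for a weight only domination ($g\leq\lambda b$ with $\psi(b)<\infty$) transfers finiteness. To manufacture dominating elements one has to land \emph{inside} the commutative algebra $C_0(\mathcal G^{(0)})$, and this is what the paper's central mechanism achieves: after averaging into $C^*_r(\ker c_0)$ and smoothing to $\sigma^c$-analytic elements $c_m$ of finite weight, it uses the approximation $P(b)=\lim_n\sum_{j=1}^{N_n}d^n_j b\, d^n_j$ with $d^n_j\in C_0(\mathcal G^{(0)})$, $\sum_j (d^n_j)^2\leq 1$ (Lemma 2.24 in \cite{Th1} --- this, not the separation of $r$ and $s$, is where hypothesis a) on trivial isotropy enters the paper's argument), combined with the KMS identity (\ref{e8}) to flip the $d^n_j$'s past $c_m$, yielding $\psi(P(c_m^2))\leq\|c_m\|\psi(c_m)<\infty$. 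Only then does a domination argument inside $C_0(\mathcal G^{(0)})$ give finiteness on all of $C_c(\mathcal G^{(0)})_+$. Your bisection argument is a legitimate Renault--Neshveyev-style alternative for proving $\psi\circ P=\psi$ \emph{once domains are under control}, but it has no substitute for this finiteness step.

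The final paragraph contains a second genuine gap: two proper (lower semi-continuous, densely defined) weights that agree on a dense $*$-subalgebra such as $C_c(\mathcal G)$ need not be equal; there is no ``common core'' principle for weights, which is exactly why the paper does not stop once it knows $\psi$ and $\varphi_m$ agree on nice elements. What the paper does instead is prove the one-sided inequality $\psi(a)\leq\psi(P(a))$ for \emph{all} $a\geq 0$ (inequality (\ref{e11})), then prove equality $\psi(a)=\psi(P(a))$ whenever $\psi(P(a))<\infty$, and finally invoke Corollary 1.15 of \cite{KV1}, whose hypotheses are precisely these two facts. To close your argument you would have to supply the same global inequality on $C^*_r(\mathcal G)_+$ and agreement on the full finiteness domain of $\varphi_m$; agreement on $C_c(\mathcal G)$ together with lower semi-continuity of both weights is not sufficient.
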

\begin{proof} By the Riesz representation theorem it suffices to show
  that $\psi$ takes finite values on non-negative elements from
  $C_c(\mathcal G^{(0)})$ and that $\psi \circ P = \psi$.

By assumption $\ker c_0$ is an open
(and closed) 
sub-groupoid of $\mathcal G$ and hence $C^*_r(\ker c_0) \subseteq C^*_r(\mathcal G)$. For
$R > 0$, set
\begin{equation}\label{e2}
Q_R(a) = \frac{1}{R}\int_0^{R} \sigma^{c_0}_t(a) \ d t.
\end{equation}
It follows easily from the formula defining the action $\sigma^c$, cf.
(\ref{b312}), that
$$
\lim_{R \to \infty} Q_R(f)(g) = \begin{cases} f(g) & \ \text{when} \ g
  \in \ker c_0 \\ 0 & \ \text{otherwise} \end{cases} 
$$
for every $f \in C_c(\mathcal G)$.
Since $\left\|Q_R\right\| \leq 1$ it follows therefore that the limit
$$
Q(a) = \lim_{R \to \infty} Q_R(a)
$$
exists for all $a \in C^*_r(\mathcal G)$ and that $Q(a) \in
C^*_r\left(\ker c_0\right)$. We aim to prove that $\psi \circ Q_R =
\psi$ and $\psi \circ Q \leq \psi$. To this end observe that there is a directed subset $\Lambda$ of 
$$
\left\{ \omega \in C^*_r\left(\mathcal G\right)^*
  : \ 0 \leq \omega \leq \psi \right\}
$$ 
such that $\psi(a) = \lim_{\omega \in
  \Lambda} \omega(a)$
for all $a \geq 0$, cf. \cite{Ku}. Fix $R > 0$ and let $a\geq 0$. Assume first that
$\psi(a) = \infty$. For any $r > 0$ and any
$t \in [0,R]$ there is an $\omega_{t} \in \Lambda$ such
that $\omega_{t}(\sigma^{c_0}_{s}(a)) > r$ for all $s$ in an open neighborhood of $t$. Thanks to the directedness of
$\Lambda$ there is therefore an $\omega \in \Lambda$ such that
$\omega\left(\sigma^{c_0}_{t}(a)\right) \geq r$ for all $t \in [0,R]$. Then  
$$
\psi(Q_R(a))\geq \omega(Q_R(a)) =  \frac{1}{R}\int_0^R
\omega(\sigma^{c_0}_{t}(a)) \ dt \ 
\geq \ r,
$$
proving that $\psi(Q_R(a)) = \infty$. If instead $\psi(a) < \infty$ we
let $\epsilon > 0$ and choose in the same way an $\omega \in \Lambda$ such
that $\omega(\sigma^{c_0}_{t}(a)) \geq \psi(a) - \epsilon$ for all $t \in [0,R]$. Then
\begin{equation}\label{e3}
\psi(Q_R(a)) \geq \omega(Q_R(a)) =  \frac{1}{R} \int_0^R
\omega(\sigma^{c_0}_{t}(a))  \ d t \geq \psi(a) - \epsilon .
\end{equation}
On the other hand the set $\left\{ b \in C^*_r\left(\mathcal G\right): \ b \geq 0, \ \psi(b) \leq
  \psi(a) \right\}$
is closed, convex and contains all Riemann sums for the
integral in (\ref{e2}), and it follows
therefore that $\psi(Q_R(a)) \leq \psi(a)$. Combined with (\ref{e3})
this shows that $\psi(Q_R(a)) = \psi(a)$ also when $\psi(a) < \infty
$. Thus
\begin{equation}\label{e4}
\psi \circ Q_R = \psi.
\end{equation}
The lower semi-continuity of $\psi$ now implies that
\begin{equation}\label{e21}
\psi \circ Q \leq \psi.
\end{equation}

We aim next to show that $\psi(g) < \infty$ for all non-negative $g
\in C_c(\mathcal G^{(0)})$. Let therefore $f$ be such a function. Since $\psi$ is densely
defined there is a sequence $\{a_n\}$ of non-negative elements in
$C^*_r\left(\mathcal G\right)$ such that $\lim_n a_n = \sqrt{f}$ and
$\psi(a_n) < \infty$ for all $n$.  
It
follows that $\lim_n Q(a_n) = \sqrt{f}$. Set $b_n =Q(a_n)$ and note
that $\psi(b_n) \leq  \psi(a_n) < \infty$ by (\ref{e21}). Let $k \in \mathbb N$ and consider
\begin{equation}\label{e5}
c_n =  \sqrt{\frac{k}{\pi}} \int_{\mathbb R}
\sigma^c_t\left(b_n\right) e^{-kt^2} \ dt.
\end{equation}
Then $c_n \in C^*_r\left(\ker c_0\right)$ is analytic for $\sigma^c$ and if $k$ is large enough
$\left\|c_n - b_n\right\| \leq \frac{1}{n}$,
cf. \cite{BR}. By approximating the integral in (\ref{e5}) by convex
combinations of elements of the form $\sigma^c_t(b_n)$ and by using that $\psi \circ \sigma^c_t = \psi$, it
follows from the lower semi-continuity of $\psi$ that $\psi(c_n) \leq
\psi\left(b_n\right) <
\infty$. Since $\ker c_0$ has trivial isotropy by the first condition a) on
$c_0$, it follows from (the
proof of) Lemma 2.24 in \cite{Th1} that there is a sequence 
$$
\left\{ d^n_j
  : \ j = 1,2, \dots, N_n \right\}, \ n = 1,2,3, \dots,
$$ 
of
non-negative elements in $C_0(\mathcal G^{(0)})$ such that 
\begin{equation}\label{e6}
\sum_{j=1}^{N_n} {d^n_j}^2 \leq 1
\end{equation}
for all $n$, and
\begin{equation}\label{e7}
P(b) = \lim_{n \to \infty}  \sum_{j=1}^{N_n} d^n_jb d^n_j  
\end{equation}
for all $b \in C^*_r(\ker c_0)$. By using that $\psi$ is densely defined in combination with
(\ref{e21}), we can choose, for each $d^n_j$ and each $k\in \mathbb N$, a positive
element $c(n,j,k) \in C^*_r(\ker c_0)$ such that $\left\|c(n,j,k)d^n_j
  -d^n_j\right\| \leq \frac{1}{k}$ and $\psi\left(c(n,j,k)\right) <
\infty$. Set 
$$
c'(n,j,k) = \frac{1}{\sqrt{\pi}} \int_{\mathbb R}
e^{-t^2}\sigma^c_t(c(n,j,k)) \ dt .
$$
Then $c'(n,j,k)$ is analytic for $\sigma^c$, $\psi(c'(n,j,k)) \leq
\psi\left(c(n,j,k)\right) < \infty$, and
$$
\left\|c'(n,j,k)d^n_j - d^n_j \right\| = \left\|\frac{1}{\sqrt{\pi}} \int_{\mathbb R}
  \sigma^c_t(c(n,j,k) d^n_j - d^n_j) e^{-t^2} \ dt\right\| \ \leq \
  \frac{1}{k}.
$$
Note that it follows
from the definition of a $\beta$-KMS weight that when $a \in \mathcal N_{\psi} \cap \mathcal
N_{\psi}^*$ is analytic for $\sigma^c$ the following identity holds:
\begin{equation}\label{e8}
\psi\left(a^*a\right) = \psi\left( \sigma^c_{\frac{-i\beta}{2}}(a)\sigma^c_{\frac{-i\beta}{2}}(a)^*\right) .
\end{equation}
(See also Proposition 1.11 in \cite{KV2}.) Since $c'(n,j,k)d_j^nc_m \in \mathcal N_{\psi} \cap \mathcal
N_{\psi}^*$ is analytic for $\sigma^c$ it follows from (\ref{e8}) that
\begin{equation}\label{e9}
\begin{split}
&\psi\left( \sum_{j=1}^{N_n} c'(n,j,k)d^n_j c_m^2 d^n_jc'(n,j,k) \right) = \sum_{j=1}^{N_n}
\psi\left( c'(n,j,k)d^n_j c_m^2 d^n_jc'(n,j,k)\right) \\
& =  \sum_{j=1}^{N_n} \psi\left(
 \sigma^c_{\frac{-i\beta}{2}} (c_m)
   {d^n_j} \sigma^c_{\frac{-i\beta}{2}}(c'(n,j,k)) \sigma^c_{\frac{-i\beta}{2}}(c'(n,j,k))^* d^n_j\sigma_{\frac{-i\beta}{2}}(c_m)^*\right) .
\end{split}
\end{equation}
Because
$$
\lim_{k \to \infty} d^n_j\sigma^c_{\frac{-i\beta}{2}}(c'(n,j,k))
-d^n_j = \lim_{k \to \infty} \frac{1}{\sqrt{\pi}}
\int_{\mathbb R} \sigma^c_t(d^n_jc(n,j,k)  -d^n_j) e^{-
  (t+\frac{i\beta}{2})^2} \ dt = 0
$$  
and $\sigma^c_{\frac{-i\beta}{2}} (c_m) \in
\mathcal N_{\psi}^*$, it follows that
\begin{equation*}
\begin{split}
&\lim_{k \to \infty}\sum_{j=1}^{N_n} \psi\left(
 \sigma^c_{\frac{-i\beta}{2}} (c_m)
   {d^n_j} \sigma^c_{\frac{-i\beta}{2}}(c'(n,j,k))
   \sigma^c_{\frac{-i\beta}{2}}(c'(n,j,k))^*
   d^n_j\sigma_{\frac{-i\beta}{2}}(c_m)^*\right)  \\
& \ \ \ \ \ \ \ = \sum_{j=1}^{N_n} \psi\left(
 \sigma^c_{\frac{-i\beta}{2}} (c_m)
   {d^n_j}^2\sigma_{\frac{-i\beta}{2}}(c_m)^*\right). 
\end{split}
\end{equation*}
We can therefore let $k$ tend to infinity in (\ref{e9}) and use the
lower semi-continuity of $\psi$ in combination with (\ref{e6}) and
(\ref{e8}) to conclude that
\begin{equation*}
\begin{split}
&
\psi\left( \sum_{j=1}^{N_n} d^n_j c_m^2 d^n_j \right) \leq  \sum_{j=1}^{N_n} \psi\left(
 \sigma^c_{\frac{-i\beta}{2}} (c_m)
   {d^n_j}^2\sigma_{\frac{-i\beta}{2}}(c_m)^*\right) \\
&\leq   \psi\left(
 \sigma^c_{\frac{-i\beta}{2}}
 (c_m)\sigma_{\frac{-i\beta}{2}}(c_m)^*\right) = \psi(c_m^2) \leq
\left\|c_m\right\| \psi(c_m).
\end{split}
\end{equation*}
Let then $n$ tend to infinity to conclude that
$\psi\left(P(c_m^2)\right) \leq \left\|c_m\right\|\psi(c_m)$. Since
$\psi(c_m) < \infty$ and $\lim_{m \to
  \infty} P(c_m^2) = f$, we conclude that
$f$ is the limit under uniform convergence of the sequence $g_m =
P(c_m^2)$ of non-negative functions from $C_0(\mathcal G^{(0)})$ such that $\psi(g_m)
< \infty$ for all $m$. 

Consider then an arbitrary non-negative $g \in C_c(\mathcal G^{(0)})$
and let $f$ be a
non-negative function $f \in C_c(\mathcal G^{(0)})$ such that $f(x) =2$ for all $x$ in
the support of $g$. As we have just shown there is a non-negative
function $h \in C_0(\mathcal G^{(0)})$ such that $\left|h(y) - f(y)\right| \leq 1$
for all $y\in \mathcal G^{(0)}$ and $\psi(h) < \infty$. Then $0 \leq g \leq gh$ and
$\psi(g) \leq \psi(gh) < \infty$.

Let then $a \geq 0$ in $C^*_r(\mathcal G)$ and $0\leq g \leq 1$ in
$C_c(\mathcal G^{(0)})$. By the same methods as above we construct a
sequence $\{c_k\}$ of positive $\sigma^c$-analytic elements in $C^*_r\left( \ker c_0\right)$ converging to $\sqrt{Q(a)}$ with $\psi(c_k)
< \infty$ for all $k$. Using that $\psi(g^2) < \infty$ we find that 
\begin{equation*}
\begin{split}
& \psi(gP(c_k^2)g) = \lim_{n\to \infty} \sum_{j=1}^{N_n}
\psi(d^n_j gc_k^2g d^n_j) \\
& = \lim_{n \to \infty} \sum_{j=1}^{N_n} \psi\left(\sigma^c_{\frac{-i\beta}{2}}(c_k) g{d^n_j}^2g \sigma^c_{\frac{-i\beta}{2}}(c_k)^* \right) 
\leq \psi\left(\sigma^c_{\frac{-i\beta}{2}}(c_k)g^2
  \sigma^c_{\frac{-i\beta}{2}}(c_k)^* \right) \\
& =
\psi\left(\sigma^c_{\frac{-i\beta}{2}}(c_kg)\sigma^c_{\frac{-i\beta}{2}}(c_kg)^*
\right) = \psi(gc_k^2g) 
\end{split}
\end{equation*}
for all $k$, and when we let $k$ tend to infinity we deduce that
$\psi(gP(a)g) = \psi(gQ(a)g)$ since $P\circ Q = P$. But $\psi(gQ(a)g) = \lim_{R \to \infty}
\psi(gQ_R(a)g) = \lim_{R \to \infty} \psi\left(Q_R(gag)\right) =
\psi(gag)$, thanks to (\ref{e4}). Thus 
\begin{equation}\label{e19}
\psi(gP(a)g) = \psi(gag).
\end{equation} 
Since
$gP(a)g \leq P(a)$ it follows that 
\begin{equation}\label{e10}
\psi(gag) \leq \psi(P(a)).
\end{equation} 
Let $\left\{g_k\right\}$ be a sequence from $C_c(\mathcal G^{(0)})$ which constitutes an
  approximate unit in $C^*_r\left(\mathcal G\right)$. Inserting
$g_k$ for $g$ in (\ref{e10}), the lower
semi-continuity of $\psi$ yields that
\begin{equation}\label{e11}
\psi(a) \leq \psi(P(a)) .
\end{equation}

Let now $a \geq 0$ be a positive element in
$C^*_r\left(\mathcal G\right)$ such that $\psi(P(a)) <
\infty$. Then $\psi(a) < \infty$ and we can combine the Cauchy-Schwarz
inequality with (\ref{e11}) to get
\begin{equation*}\label{e12}
\begin{split}
&\left|\psi(g_kag_k) - \psi(a)\right| \leq \left|
  \psi\left(g_ka(g_k-1)\right)\right| + \left|\psi((g_k-1)a)\right| \\
& \leq \psi\left(g_kag_k\right)^{\frac{1}{2}}\psi\left((g_k-1)a(g_k-1)\right)^{\frac{1}{2}} +
  \psi\left((g_k-1)a(g_k-1)\right)^{\frac{1}{2}}\psi(a)^{\frac{1}{2}}\\
& \leq \psi\left(g_kP(a)g_k\right)^{\frac{1}{2}}\psi\left((g_k
    -1)P(a)(g_k-1)\right)^{\frac{1}{2}} +
  \psi\left((g_k-1)P(a)(g_k-1)\right)^{\frac{1}{2}}\psi(a)^{\frac{1}{2}}.
\end{split}
\end{equation*}
Note that $\lim_{k \to \infty} \psi\left(g_kP(a)\right) = \lim_{k \to
  \infty} \psi\left(g_k^2P(a)\right) = \psi(P(a))$ since the sequences
$\{g_kP(a)\}$ and $\left\{g_k^2P(a)\right\}$ both converge increasingly to $P(a)$. It follows that 
$$
\lim_{k \to \infty}
\psi\left((g_k-1)P(a)(g_k-1)\right) = 0
$$ 
and the estimate above then
shows that $\lim_{k \to \infty} \psi(g_kag_k) = \psi(a)$. By using
this conclusion in combination with (\ref{e19}) it follows that
$\psi(a) = \psi\circ P(a)$ when $a \geq 0$ and $\psi \circ P(a) < \infty$. Since
$\psi \circ P$ is a proper weight we can
now use Corollary 1.15 in \cite{KV1} to conclude that $\psi = \psi
\circ P$. 
\end{proof}

By taking $c=c_0$ in the theorem we obtain the following corollary.

\begin{cor}\label{e22} Let $c : \mathcal G \to \mathbb R$ be a
  homomorphism such that $\ker c \cap \mathcal G_x^x = \{x\}$ for all
  $x\in \mathcal G^{(0)}$ and $\ker c$ is open in $\mathcal G$. 

The map $m \mapsto \varphi_{m}$ is a bijection from the $(\mathcal G,c)$-conformal measures $m$ on $\mathcal G^{(0)}$
  with exponent $\beta$ onto the set of $\beta$-KMS weights for the action
  $\sigma^c$ on $C^*_r(\mathcal G)$.
\end{cor}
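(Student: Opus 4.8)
The plan is to read off both the well-definedness and the surjectivity of the map directly from \refthm{e1} and \refprop{c11}, the crucial observation being that specializing to $c_0 = c$ turns the extra invariance hypothesis of \refthm{e1} into something automatic. First I would record that the map lands where it should: if $m$ is $(\mathcal G,c)$-conformal with exponent $\beta$, then the implication 1) $\Rightarrow$ 3) of \refprop{c11} shows that $\varphi_m$ is a $\beta$-KMS weight for $\sigma^c$, so $m \mapsto \varphi_m$ really does map conformal measures into the set of $\beta$-KMS weights.

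For surjectivity I would take an arbitrary $\beta$-KMS weight $\psi$ for $\sigma^c$ and apply \refthm{e1} with $c_0 = c$. Conditions a) and b) of that theorem are exactly the standing hypotheses imposed on $c$ in the statement of the corollary, namely $\ker c \cap \mathcal G_x^x = \{x\}$ for all $x$ and $\ker c$ open. The only further hypothesis of \refthm{e1} is that $\psi$ be invariant under $\sigma^{c_0}$; but with $c_0 = c$ this reads $\psi \circ \sigma^c_t = \psi$, which is precisely condition i) in the definition of a $\beta$-KMS weight and therefore holds automatically. Hence \refthm{e1} furnishes a regular Borel measure $m$ with $\psi = \varphi_m$, and then the implication 3) $\Rightarrow$ 1) of \refprop{c11}, applied to $\varphi_m = \psi$, shows that $m$ is $(\mathcal G,c)$-conformal with exponent $\beta$. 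Thus every $\beta$-KMS weight lies in the image and arises from a conformal measure.

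Injectivity I would obtain from the uniqueness clause of the Riesz representation theorem: for non-negative $f \in C_c(\mathcal G^{(0)})$ one has $P(f) = f$, so $\varphi_m(f) = \int_{\mathcal G^{(0)}} f \, dm$, and therefore $\varphi_m = \varphi_{m'}$ forces $\int f \, dm = \int f \, dm'$ for all such $f$, whence $m = m'$. Since the substantive work has already been carried out in \refthm{e1}, I do not anticipate a genuine obstacle in the corollary itself; the single point that must not be glossed over is the verification that, once $c_0 = c$, the $\sigma^{c_0}$-invariance of $\psi$ is not an additional constraint but is subsumed by the KMS condition i), and it is this remark that makes the passage from the theorem to the corollary immediate.
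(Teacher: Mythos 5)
Your proposal is correct and is exactly the paper's argument: the paper proves the corollary with the single remark ``by taking $c=c_0$ in the theorem,'' and your write-up simply makes explicit what that entails --- well-definedness and conformality of the preimage via the equivalences 1) $\Leftrightarrow$ 3) of Proposition~\ref{c11}, surjectivity via Theorem~\ref{e1} with the key observation that $\sigma^{c_0}$-invariance collapses to KMS condition i) when $c_0=c$, and injectivity from $P(f)=f$ on $C_c(\mathcal G^{(0)})$ together with Riesz uniqueness.
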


The bijection in Corollary \ref{e22} between measures and weights restricts of course
to a bijective correspondence between KMS states and
$(\mathcal G,c)$-conformal Borel probability measures. For those the
corollary is a consequence of Theorem 1.3 in \cite{N}, and the condition
that $\ker c$ is open is not necessary for states. But without the condition that $\ker c \cap \mathcal G_x^x$ is
trivial there can be KMS states that do not factor through the
conditional expectation $P$ and are not determined by their
restriction to $C_0(\mathcal G^{(0)})$. In \cite{N} these additional KMS states are
given a general description in terms of fields of
traces.

It is not inconceivable that Corollary \ref{e22}
remains true without the condition that $\ker c$ is open; this is left
as an unsolved problem. Theorem \ref{e1} and Corollary \ref{e22} suffice for the actions on graph
algebras we study in this paper.

\section{KMS weights on the $C^*$-algebra of a local homeomorphism}

\subsection{The RDA-groupoid of a local homeomorphism}

Now we turn to the case where $\mathcal G$ is the RDA-groupoid $\Gamma_{\phi}$ of a local
homeomorphism $\phi$ on a locally compact second countable Hausdorff space $X$. To
introduce this construction set
$$
\Gamma_{\phi} = \left\{ (x,k,y) \in X \times \mathbb Z  \times X :
  \ \exists n,m \in \mathbb N, \ k = n -m , \ \phi^n(x) =
  \phi^m(y)\right\} .
$$
This is a groupoid with the set of composable pairs being
$$
\Gamma_{\phi}^{(2)} \ =  \ \left\{\left((x,k,y), (x',k',y')\right) \in \Gamma_{\phi} \times
  \Gamma_{\phi} : \ y = x'\right\}.
$$
The multiplication and inversion are given by 
$$
(x,k,y)(y,k',y') = (x,k+k',y') \ \text{and}  \ (x,k,y)^{-1} = (y,-k,x)
.
$$
Note that the unit space of $\Gamma_{\phi}$ can be identified with
$X$ via the map $x \mapsto (x,0,x)$. Under this identification the
range map $r: \Gamma_{\phi} \to X$ is the projection $r(x,k,y) = x$
and the source map the projection $s(x,k,y) = y$.

To turn $\Gamma_{\phi}$ into a locally compact topological groupoid, fix $k \in \mathbb Z$. For each $n \in \mathbb N$ such that
$n+k \geq 0$, set
$$
{\Gamma_{\phi}}(k,n) = \left\{ \left(x,l, y\right) \in X \times \mathbb
  Z \times X: \ l =k, \ \phi^{k+n}(x) = \phi^n(y) \right\} .
$$
This is a closed subset of the topological product $X \times \mathbb Z
\times X$ and hence a locally compact Hausdorff space in the relative
topology.
Since $\phi$ is locally injective $\Gamma_{\phi}(k,n)$ is an open subset of
$\Gamma_{\phi}(k,n+1)$ and the union
$$
{\Gamma_{\phi}}(k) = \bigcup_{n \geq -k} {\Gamma_{\phi}}(k,n) 
$$
is a locally compact Hausdorff space in the inductive limit topology. The disjoint union
$$
\Gamma_{\phi} = \bigcup_{k \in \mathbb Z} {\Gamma_{\phi}}(k)
$$
is then a locally compact Hausdorff space in the topology where each
${\Gamma_{\phi}}(k)$ is an open and closed set. In fact, as is easily verified, $\Gamma_{\phi}$ is a locally
compact groupoid in the sense of \cite{Re1} and an \'etale groupoid,
i.e. the range and source maps are local homeomorphisms.

By construction there is a canonical periodic one-parameter group acting
on $C^*_r(\Gamma_{\phi})$, given by the homomorphism $c_0$ on
$\Gamma_{\phi}$ defined such that
$c_0 (x,k,y) = k$. The \emph{gauge action} $\gamma$ on
$C^*_r(\Gamma_{\phi})$ is the corresponding
one-parameter group $\gamma =
\sigma^{c_0}$. That is,
$$
\gamma_t(f)(x,k,y) = e^{ikt} f(x,k,y) 
$$
when $f \in C_c\left(\Gamma_{\phi}\right)$. Since $c_0$ clearly
satisfies the two conditions required to apply Theorem \ref{e1} we
obtain the following.

\begin{prop}\label{c35} Let $c : \Gamma_{\phi} \to \mathbb R$ be a
  continuous homomorphism. There is a bijective correspondence between the
  $(\Gamma_{\phi}, c)$-conformal measures $m$ on $X$ with exponent
  $\beta$ and the
  gauge-invariant $\beta$-KMS weights for the
  action $\sigma^c$ on
  $C^*_r\left(\Gamma_{\phi}\right)$. The gauge-invariant $\beta$-KMS
  weight $\varphi_m$ associated to $m$ is
$$
\varphi_m(\cdot) = \int_X P(\cdot) \ dm .
$$
\end{prop}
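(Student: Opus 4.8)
The plan is to deduce this proposition directly from the two main results already established, by taking the gauge cocycle $c_0(x,k,y)=k$ in the role of the second homomorphism in Theorem \ref{e1}. First I would check that $c_0$ meets conditions a) and b) of that theorem, which is the only structural point that is merely asserted (``clearly'') in the text. The kernel $\ker c_0$ is precisely $\{(x,0,y)\in\Gamma_{\phi}\}=\Gamma_{\phi}(0)$, which is open (indeed open and closed) by the construction of the topology on $\Gamma_{\phi}$, giving condition b). The isotropy group at $x$ is $\{(x,k,x)\in\Gamma_{\phi}\}$, on which $c_0$ takes the value $k$; hence $\mathcal G^x_x\cap\ker c_0=\{(x,0,x)\}=\{x\}$, so $\ker c_0$ is an equivalence relation, giving condition a). Thus both hypotheses of Theorem \ref{e1} hold with this choice of $c_0$.

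For the forward direction, let $m$ be $(\Gamma_{\phi},c)$-conformal with exponent $\beta$. By the equivalence of 1) and 3) in Proposition \ref{c11}, $\varphi_m$ is a $\beta$-KMS weight for $\sigma^c$. To see that it is gauge-invariant I would use that $P\circ\gamma_t=P$: since $\gamma_t$ multiplies the value of $f$ at $(x,k,y)$ by $e^{ikt}$, it fixes the diagonal part ($k=0$) that $P$ records, so $\varphi_m\circ\gamma_t=\int_X P(\gamma_t(\cdot))\,dm=\int_X P(\cdot)\,dm=\varphi_m$. Hence $\varphi_m$ is a gauge-invariant $\beta$-KMS weight of the asserted form.

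Conversely, let $\psi$ be any gauge-invariant $\beta$-KMS weight for $\sigma^c$. Gauge-invariance is exactly $\sigma^{c_0}$-invariance, so Theorem \ref{e1} applies and produces a regular Borel measure $m$ on $X$ with $\psi=\varphi_m$. Since $\varphi_m$ is then a $\beta$-KMS weight, the implication 3) $\Rightarrow$ 1) of Proposition \ref{c11} shows that $m$ is $(\Gamma_{\phi},c)$-conformal with exponent $\beta$. This establishes surjectivity of $m\mapsto\varphi_m$ onto the gauge-invariant $\beta$-KMS weights.

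Finally, injectivity of $m\mapsto\varphi_m$ follows from the Riesz representation theorem: for $f\in C_c(X)$ viewed inside $C_c(\Gamma_{\phi})$ one has $P(f)=f$, so $\varphi_m(f)=\int_X f\,dm$ recovers $m$ from $\varphi_m$. I do not expect any serious obstacle, since all the analytic substance was carried out in Proposition \ref{c11} and Theorem \ref{e1}; the only points demanding care are the verification that $c_0$ satisfies the two structural conditions and the elementary computation showing $\varphi_m\circ\gamma_t=\varphi_m$.
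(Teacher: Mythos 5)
Your proposal is correct and takes essentially the same route as the paper: the paper's proof of Proposition \ref{c35} consists precisely of the remark that the gauge cocycle $c_0(x,k,y)=k$ ``clearly'' satisfies conditions a) and b) of Theorem \ref{e1}, which is then combined with Proposition \ref{c11} to get the bijection. Your write-up merely makes explicit the details the paper leaves to the reader (openness of $\Gamma_{\phi}(0)$, triviality of $\ker c_0 \cap (\Gamma_{\phi})^x_x$, gauge-invariance of $\varphi_m$ via $P\circ\gamma_t=P$, and injectivity via Riesz), all of which are handled correctly.
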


Let $F : X \to \mathbb R$ be a continuous function and define $c_F :
\Gamma_{\phi} \to \mathbb R$ such that
$$
c_F(x,k,y) = \lim_{n \to \infty} \left(\sum_{i=0}^{n+k}
F\left(\phi^i(x)\right) - \sum_{i=0}^n F\left(\phi^i(y)\right)\right) .
$$
Then $c_F$ is a continuous homomorphism. Following a terminology first
introduced in dynamical systems by Denker and Urbanski in \cite{DU} we
say that a Borel measure $m$ on $X$ is \emph{$e^{\beta F}$-conformal} when
\begin{equation}\label{d81}
m(\phi(A)) = \int_A e^{\beta F(x)} \ dm(x)
\end{equation}
for every Borel set $A \subseteq X$ such that $\phi : A \to X$ is injective.

\begin{lemma}\label{d80} Let $m$ be a regular Borel measure on $X$. Then $m$
  is $(\Gamma_{\phi},c_F)$-conformal with exponent $\beta$ if and only
  $m$ is $e^{\beta F}$-conformal.
\end{lemma}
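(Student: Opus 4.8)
The plan is to route both implications through a single measure-theoretic reformulation of \eqref{c31}, and then to exploit the multiplicativity of $c_F$ under composition of bisections. For an open bisection $W$ write $\tau_W = s\circ (r|_W)^{-1} : r(W)\to s(W)$ for the associated partial homeomorphism and $\rho_W = e^{\beta c_F\circ (r|_W)^{-1}}$ for the resulting continuous weight on $r(W)$. First I would apply \eqref{c31} to the sub-bisections $W\cap r^{-1}(U)$ for $U\subseteq r(W)$ open; this says exactly that the two Borel measures $B\mapsto m(\tau_W(B))$ and $B\mapsto \int_B \rho_W\,dm$ on $r(W)$ agree on open sets. Since $m$ is regular they are $\sigma$-finite and agree on a generating $\pi$-system, hence coincide on all Borel $B\subseteq r(W)$. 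Covering an arbitrary Borel bisection by open ones and disjointifying then yields the reformulation: \emph{$m$ is $(\Gamma_\phi,c_F)$-conformal with exponent $\beta$ if and only if $(\tau_W)_*(\rho_W\, m|_{r(W)}) = m|_{s(W)}$ for every Borel bisection $W$.}

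With this in hand the forward implication is immediate. Given a Borel set $A$ with $\phi|_A$ injective, the set $G_A = \{(z,1,\phi(z)): z\in A\}$ is a Borel bisection with $r(G_A)=A$, $s(G_A)=\phi(A)$ and $\tau_{G_A}=\phi|_A$; the telescoping sum in the definition of $c_F$ gives $c_F(z,1,\phi(z)) = F(z)$, so $\rho_{G_A}=e^{\beta F}$. The reformulated identity for $G_A$ then reads $m(\phi(A)) = \int_A e^{\beta F}\,dm$, which is precisely \eqref{d81}.

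For the converse the key step is a composition lemma. If $W_1,W_2$ are Borel bisections with $s(W_1)=r(W_2)$ then $\tau_{W_1W_2}=\tau_{W_2}\circ\tau_{W_1}$, while the cocycle identity $c_F(g_1g_2)=c_F(g_1)+c_F(g_2)$ gives $\rho_{W_1W_2} = \rho_{W_1}\cdot(\rho_{W_2}\circ\tau_{W_1})$; a one-line push-forward computation then shows that if $W_1$ and $W_2$ satisfy the reformulated identity, so does $W_1W_2$. The same bookkeeping (using $c_F(g^{-1})=-c_F(g)$) shows the identity is stable under inverses, under restriction to Borel sub-bisections, and under countable disjoint unions. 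The generators are conformal: for $\phi|_U$ injective the graph bisection $G_U$ is conformal by the $e^{\beta F}$-hypothesis, and the diagonal bisections are conformal since $c_F(z,0,z)=0$. Finally, the relation $\phi^{k+n}(x)=\phi^n(y)$ defining $\Gamma_\phi(k,n)$ exhibits $(x,k,y)$ as a product of $k+n$ forward graph-generators $(w,1,\phi(w))$ followed by $n$ inverse graph-generators; after shrinking to neighbourhoods on which each $\phi$-step is injective, this writes a neighbourhood of each point of an arbitrary open bisection $W$ as a finite composite of generators, and a countable cover followed by disjointification recovers $W$ from the closure properties.

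I expect the main obstacle to be this last reduction of a general bisection to the graph generators: one must simultaneously handle the grading $W=\bigsqcup_k W\cap\Gamma_\phi(k)$, the inductive-limit topology on each $\Gamma_\phi(k)=\bigcup_n\Gamma_\phi(k,n)$, and the fact that $\phi$ is only locally injective, so the forward/backward factorization is legitimate only after passing to small open pieces on which each $\phi$-step is a homeomorphism. By contrast, the composition lemma and the forward direction become routine once the measure-theoretic reformulation is established.
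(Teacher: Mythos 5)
Your proposal is correct, and in the converse direction it follows a genuinely different route from the paper's. The forward direction in the paper is essentially yours: it applies (\ref{c31}) to the graph bisections $\left\{(x,1,\phi(x)) : x \in V\right\}$ over open sets $V$ contained in a set on which $\phi$ is a homeomorphism, and then upgrades from open to Borel sets by regularity of $m$ --- your $\pi$-system reformulation in different clothing. For the converse, the paper works with subsets of $X$ rather than with bisections: it iterates (\ref{d81}) to the $n$-step identity (\ref{d82}), derives from it an ``inverted'' identity (\ref{d83}) for pairs of Borel sets $A,B$ with $\phi^n(A)=\phi^m(B)$ (and $\phi^n|_A$, $\phi^m|_B$ injective), and then applies (\ref{d83}) directly with $A=r(W(k,n))$, $B=s(W(k,n))$, where $W(k,n)=W\cap\Gamma_{\phi}(k,n)$, finishing by monotone convergence in $n$ and additivity in $k$. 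Your composition lemma is the groupoid-level form of the same two identities --- (\ref{d82}) is a composite of forward generators, (\ref{d83}) a composite with one inverse --- so the substance is parallel; the genuine difference is the localization step you flagged as the ``main obstacle'', and there your version is in fact the more careful one. The paper justifies applying (\ref{d83}) by asserting that, since $W(k,n)$ is a bisection, $\phi^{n+k}$ is injective on $r(W(k,n))$ and $\phi^n$ is injective on $s(W(k,n))$; this inference is false in general: for the full one-sided shift $\phi$ on $X=\{0,1\}^{\mathbb N}$ the set $W=\left\{(0w,0,1w) : w\in X\right\}\cup\left\{(1w,0,0w) : w\in X\right\}$ is an open bisection contained in $\Gamma_{\phi}(0,1)$, yet $\phi$ is two-to-one on $r(W)=X$. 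The paper's argument therefore needs exactly the repair your sketch builds in: cover $W(k,n)$ by basic open bisections $\left\{(x,k,y) : x\in U,\ y\in V,\ \phi^{k+n}(x)=\phi^n(y)\right\}$ with $\phi^{k+n}|_U$ and $\phi^n|_V$ injective (these do form a basis of the topology of $\Gamma_{\phi}$, by local injectivity of the powers of $\phi$ and second countability), disjointify, and apply the identity piecewise before summing. What the paper's route buys, once localized this way, is brevity: staying on $X$ avoids formalizing the closure of conformal bisections under composition, inversion, Borel restriction and countable disjoint unions. What your route buys is that those closure properties make the bookkeeping transparent and make the localization unavoidable rather than overlooked; your cocycle computations ($\rho_{W_1W_2}=\rho_{W_1}\cdot(\rho_{W_2}\circ\tau_{W_1})$, the inverse case via $c_F(g^{-1})=-c_F(g)$, and the push-forward argument) are all correct, so nothing essential is missing from your outline.
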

\begin{proof} Assume first that $m$ is  $(\Gamma_{\phi},c_F)$-conformal
  with exponent $\beta$ and consider a Borel subset $A \subseteq X$
  such that $\phi$ is injective on $A$. Since $\phi$ is a local homeomorphism and $X$
  is second countable we can write $A$ as a countable disjoint union
$$
A = \sqcup_{i \in I} A_i
$$
of Borel sets with the property that for each $i$ there is an open set $U_i$
such that $A_i \subseteq U_i$ and $\phi : U_i \to
\phi(U_i)$ is a homeomorphism. For each $i$ and each open subset $V
\subseteq U_i$ the set $\left\{(x,1, \phi(x)) : \ x \in V \right\}$ is
an open bi-section $W$ in $\Gamma_{\phi}$ with $s(W) = \phi(V), \
r(W) = V$, and it follows therefore that
$$
m(\phi(V)) = \int_V e^{\beta c_F\left(x,1,\phi(x)\right)} \ dm(x) = \int_V e^{\beta F(x)} \ dm(x) .
$$
By additivity and regularity of $m$,
$$
m(\phi(A)) = \sum_{i \in I} m(\phi(A_i)) = \sum_i \int_{A_i} e^{\beta
  F(x)} \ dm(x) = \int_A e^{\beta F(x)} \ dm(x).
$$

Assume next that (\ref{d81}) holds whenever $A$ is a Borel subset on
which $\phi$ is injective. It follows by iteration that 
\begin{equation}\label{d82}
m(\phi^n(A)) = \int_A e^{\beta \sum_{i=0}^{n-1} F(\phi^i(x))} \ dm(x)
\end{equation}
for every Borel set $A \subseteq X$ such that $\phi^n : A \to X$ is
injective, $n = 1,2, 3, \dots$, and then by 'inversion' that
 \begin{equation}\label{d83}
m(B) = \int_A e^{\beta \left( \sum_{i=0}^{n-1} F(\phi^i(x)) - \sum_{j=1}^mF\left(\phi^{-j}\left(\phi^n(x)\right)\right)\right)  } \ dm(x)
\end{equation}
when $A$ and $B$ are Borel subsets of $X$ such that $\phi^n$ is
injective on $A$, $\phi^m$ is injective on $B$ and $\phi^n(A) =
\phi^m(B)$, where $\phi^{-j}$ denotes the inverse of $\phi^j :
\phi^{m-j}(B) \to \phi^m(B)$.

Consider then an open bi-section $W$ of
$\Gamma_{\phi}$. For $k \in \mathbb Z$ set $W_k = \Gamma_{\phi}(k)
\cap W$. By additivity of measures, to establish (\ref{c31}) it suffices to establish it for
$W_k$. As a further reduction note that by monotone continuity of measures and integrals it suffices to
establish (\ref{c31}) with $W_k$ replaced by the set
$$ 
W(k,n) = W \cap \Gamma_{\phi}(k,n).
$$
Since $W_{k,n}$ is a bi-section in
$\Gamma_{\phi}$ it follows that $\phi^{n+k}$ is injective on
$r\left(W(k,n)\right)$ and $\phi^n$ is injective on
$s\left(W(k,n)\right)$. In addition $\phi^{n+k}\left(r(W(k,n)\right) =
\phi^n\left(s(W(k,n)\right)$ and we can apply (\ref{d83}). This
establishes (\ref{c31}) because
$$
c_F\left(r_{W(k,n)}^{-1}(x)\right) = \sum_{i=0}^{n+k-1} F(\phi^i(x)) -
\sum_{j=1}^nF\left(\phi^{-j}\left(\phi^n(x)\right)\right)
$$
when $x \in r(W(k,n))$.
 
\end{proof}

For homomorphisms $\Gamma_{\phi} \to \mathbb R$ of the form ${c_F}$ we have now proved the following reformulation of Proposition \ref{c35}.

\begin{prop}\label{c84} There is a bijective correspondence between
  the regular $e^{\beta F}$-conformal measures $m$ on $X$ and the
  gauge-invariant $\beta$-KMS weights for the
  action $\sigma^{c_F}$ on
  $C^*_r\left(\Gamma_{\phi}\right)$. The gauge-invariant $\beta$-KMS
  weight $\varphi_m$ associated to $m$ is
$$
\varphi_m(\cdot) = \int_X P(\cdot) \ dm .
$$
\end{prop}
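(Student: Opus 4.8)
The plan is to obtain this proposition simply by combining the two results immediately preceding it, since essentially all of the substantive work has already been carried out. The key observation is that, as noted in the text just before Lemma \ref{d80}, $c_F : \Gamma_\phi \to \mathbb{R}$ is a genuine continuous homomorphism, so that Proposition \ref{c35} applies verbatim with the choice $c = c_F$.

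First I would invoke Proposition \ref{c35} with $c = c_F$. This already produces a bijection $m \mapsto \varphi_m$, with $\varphi_m(\cdot) = \int_X P(\cdot)\, dm$, between the $(\Gamma_\phi, c_F)$-conformal measures on $X$ with exponent $\beta$ and the gauge-invariant $\beta$-KMS weights for $\sigma^{c_F}$ on $C^*_r(\Gamma_\phi)$. In particular both the integral formula appearing in the statement and the fact that the correspondence is a bijection onto the \emph{full} set of gauge-invariant $\beta$-KMS weights are inherited directly, with no further analysis needed. All of the analytic content behind this — the construction of $\varphi_m$, the verification of the KMS condition, and the surjectivity onto all gauge-invariant weights — resides in Theorem \ref{e1}, Proposition \ref{c11} and Proposition \ref{c35}.

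Second, I would apply Lemma \ref{d80}, which asserts that for a regular Borel measure $m$ the two conditions ``$(\Gamma_\phi, c_F)$-conformal with exponent $\beta$'' and ``$e^{\beta F}$-conformal'' coincide. Thus the source of the bijection of the previous paragraph is, as a set, exactly the collection of regular $e^{\beta F}$-conformal measures, and relabelling the source set of the bijection of Proposition \ref{c35} by means of this identification yields the claimed correspondence together with the same integral formula for $\varphi_m$. There is no real obstacle to overcome: the proposition is a reformulation of Proposition \ref{c35} under the change of terminology supplied by Lemma \ref{d80}. The only point requiring care is to confirm that the two results concern precisely the same homomorphism $c_F$ and the same exponent $\beta$, which is immediate from the definitions.
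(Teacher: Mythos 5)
Your proposal is correct and follows exactly the paper's own route: the paper presents Proposition \ref{c84} as ``a reformulation of Proposition \ref{c35}'' for homomorphisms of the form $c_F$, obtained by substituting the equivalence of Lemma \ref{d80} into the bijection of Proposition \ref{c35}, which is precisely what you do.
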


If $F$ is either strictly positive or strictly negative, the equality $\ker c_F \cap
\left(\Gamma_{\phi}\right)^x_x = \{x\}$ holds for all $x \in X$, and it
follows then from \cite{N} that the $\beta$-KMS states are all
gauge-invariant. When $F$ is also locally constant it follows from
Corollary \ref{e2}, applied with $c= c_F$, that the same is true for $\beta$-KMS weights, and
Proposition \ref{c84} is then true with the word 'gauge-invariant'
deleted.

\begin{remark} Since $\phi$ is a local homeomorphism the Ruelle
  operator $L_g : C_c(X) \to C_c(X)$ is defined for every continuous
  and bounded
  real-valued function $g$ on $X$; viz.
$$
L_g(f)(x) = \sum_{y\in \phi^{-1}(x)} e^{g(y)} f(y) .
$$
The dual operator $L_g^*$ acts on the regular measures in the natural
way:
$$
L_g^*(m)(f) = \int_X L_g(f) \ dm.
$$
The Ruelle operator makes it possible to present an integrated version
of the identity (\ref{d81}) defining conformality of a
measure. Specifically, it is not difficult to see that a regular Borel
measure $m$ on $X$ is $e^{\beta F}$-conformal if and only if
$L_{-\beta F}^*(m) = m$, cf. \cite{Re2}. Therefore Proposition \ref{c84} is a
generalization to weights of Theorem 6.2 in \cite{Th2}. 
    
\end{remark}

\section{KMS weights on graph $C^*$-algebras}

\subsection{KMS weights for the generalized gauge actions on graph algebras}

Graph algebras were introduced in \cite{KPRR} and we shall
adopt notation and terminology from \cite{KPRR}. In particular, we assume that the
directed graph $G$ under consideration has at most countably many
vertexes and edges, and that it is 'row-finite', meaning that the number
of edges emitted from any vertex is finite. Furthermore, we assume
that there are no sinks, i.e. every vertex emits an edge.

Let $V_G$ and $E_G$ denote the set of vertexes and edges in $G$,
respectively. An \emph{infinite path} in $G$ is an element $p  \in
\left(E_G\right)^{\mathbb N}$ such that $r(p_i) = s(p_{i+1})$ for all
$i$, where we let $r(e)$ and $s(e)$ denote the range and source of an
edge $e \in E_G$, respectively.  A finite path $\mu = e_1e_2 \dots e_n$ is
defined similarly, and we extend the range and source maps to finite
paths such that $s(\mu) = s(e_1)$ and
$r(\mu) = r(e_n)$. The number of edges in $\mu$ is its \emph{length}
and we denote it by $|\mu|$. We let $P(G)$ denote the set of
infinite paths in $G$ and extend the source map to $P(G)$ such that
$s(p) = s(p_1)$ when $p = (p_i)_{i=1}^{\infty}$. To describe the topology of $P(G)$, let $\mu =
e_1e_2\cdots e_n$ be a finite path in $G$. We can then consider the \emph{cylinder} 
$$
Z(\mu) = \left\{p \in P(G) : \ p_i = e_i, \ i =1,2, \dots,n
\right\}.
$$
$P(G)$ is a totally disconnected second countable locally compact
Hausdorff space in the topology for which the collection of cylinders is
a base, \cite{KPRR}. The graph $C^*$-algebra $C^*(G)$ is then the (reduced) groupoid $C^*$-algebra $C^*_r\left(\Gamma_{\sigma}\right)$ of the RDA-groupoid $\Gamma_{\sigma}$ corresponding to the local homeomorphism $\sigma$ on the path space $P(G)$, defined such that
$$
\sigma\left(e_0e_1e_2 e_3 \cdots\right) = e_1e_2e_3 \cdots .
$$

Let $F : P(G) \to \mathbb R$ be a continuous function. The
corresponding homomorphism $c_F : \Gamma_{\sigma} \to \mathbb R$ is
then given by
$$
c_F(p, k, q) =  \lim_{n \to \infty} \left( \sum_{i=0}^{n+k} F\left(\sigma^i(p)\right) - \sum_{i=0}^n F\left(\sigma^i(q)\right) \right), 
$$
and we can consider the continuous one-parameter action $\alpha^F = \sigma^{c_F}$ on
$C^*(G)$ defined such that
$$
\alpha^F_t(f)(z) = e^{itc_F(z)} f(z)
$$
when $f \in C_c\left(\Gamma_{\sigma}\right)$. It follows from
Proposition \ref{c84} that the gauge-invariant $\beta$-KMS weights for
$\alpha^F$ are in
one-to-one correspondence with the $e^{\beta F}$-conformal Borel
measures on $P(G)$. In order to obtain a complete description of these
measures we
assume now that $F$ is given by
a map $F_0 : V_G \to \mathbb R$ through the formula
$$
F(p) = F_0\left(s(p)\right),
$$
where $s(p) \in V_G$ is the source of the path $p \in
P(G)$. It follows in particular that $F$ and hence also $c_F$ is
locally constant.

\begin{remark}\label{Remark1}
The assumption that $F$ only depends on the initial vertex of a path
is not as restrictive as it may appear at first sight. If
instead $F$ depends on an initial part of the paths, say on the first
$k$ edges, it would still be possible to apply the results we obtain
here. To see how, define a new graph $H$ whose vertexes are the paths
of length $k$ in $G$ and with an edge from the path $\mu = e_1e_2\dots
e_k$ to $\mu' = e'_1e'_2\dots e'_k$ when $e_2e_3e_4 \cdots e_k =
e'_1e'_2e'_3 \cdots e'_{k-1}$. The resulting path-space $P(H)$ is then homeomorphic to $P(G)$ under a
shift-commuting homeomorphism and the function $F$ would on $P(H)$
only depend on the initial vertex of the paths.  
\end{remark}

We extend $F_0$ to a map on finite paths such that
$$
F_0(\mu) = \sum_{i=1}^n F_0(s(e_i)),
$$
when $\mu = e_1e_2 \cdots e_n$. For any vertex $v \in V_G$ we set
\begin{equation}\label{k11}
C_v = \left\{ p \in P(G) : \ s(p) = v \right\}.
\end{equation}

\begin{lemma}\label{f14} Let $m$ be a regular Borel measure on
  $P(G)$. Then $m$ is $(\Gamma_{\sigma}, c_F)$-conformal with exponent $\beta$ if
  and only if
\begin{equation}\label{c36}
m(Z(\mu)) = e^{-\beta F_0(\mu)} m\left(C_{r(\mu)}\right)
\end{equation}
for every cylinder $Z(\mu)$.
\end{lemma}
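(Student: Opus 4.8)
The plan is to invoke Lemma~\ref{d80}, which already identifies $(\Gamma_{\sigma},c_F)$-conformality with $e^{\beta F}$-conformality, and then to prove that for a regular Borel measure $m$ on $P(G)$ the $e^{\beta F}$-conformality condition (\ref{d81}) is equivalent to the cylinder identity (\ref{c36}). Two elementary observations drive everything. First, since $F(p)=F_0(s(p))$ and the source map is constant on a cylinder, $F$ takes the constant value $F_0(s(\mu))$ on $Z(\mu)$; more generally, for $p\in Z(\mu)$ with $\mu=e_1\cdots e_n$ one has $\sum_{i=0}^{n-1}F(\sigma^i(p))=\sum_{i=1}^n F_0(s(e_i))=F_0(\mu)$. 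Second, $\sigma^n$ is injective on $Z(\mu)$ and $\sigma^n(Z(\mu))=C_{r(\mu)}$.

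For the forward implication, assume $m$ is $e^{\beta F}$-conformal. Iterating (\ref{d81}) exactly as in the derivation of (\ref{d82}) gives $m(\sigma^n(A))=\int_A e^{\beta\sum_{i=0}^{n-1}F(\sigma^i(x))}\,dm(x)$ for every Borel set $A$ on which $\sigma^n$ is injective. Taking $A=Z(\mu)$ with $n=|\mu|$ and using the two observations, the integrand is the constant $e^{\beta F_0(\mu)}$, so $m(C_{r(\mu)})=m(\sigma^n(Z(\mu)))=e^{\beta F_0(\mu)}m(Z(\mu))$, which is precisely (\ref{c36}).

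The substantial direction is the converse, and the main obstacle is to pass from the values of $m$ on cylinders to the full conformality identity on \emph{every} Borel set on which $\sigma$ is injective. I would handle this in three steps. (i) Reduce to a single cylinder: if $\sigma$ is injective on a Borel set $A$, then writing $A=\bigsqcup_{e\in E_G}(A\cap Z(e))$ the images $\sigma(A\cap Z(e))$ are pairwise disjoint, since distinct first edges together with injectivity of $\sigma$ on $A$ forbid a common image; hence both sides of (\ref{d81}) split as countable sums over $e$, and it suffices to treat Borel sets $A\subseteq Z(e)$. (ii) On $Z(e)$ the map $\sigma$ is a homeomorphism onto $C_{r(e)}$ and $F\equiv F_0(s(e))$, so the claim for $A\subseteq Z(e)$ reads $m(\sigma(A))=e^{\beta F_0(s(e))}m(A)$; equivalently, the measure $m|_{C_{r(e)}}$ equals $e^{\beta F_0(s(e))}$ times the push-forward $\nu:=(\sigma|_{Z(e)})_*(m|_{Z(e)})$. (iii) Verify that these two measures agree on cylinders: for a cylinder $Z(\lambda)\subseteq C_{r(e)}$ one has $(\sigma|_{Z(e)})^{-1}(Z(\lambda))=Z(e\lambda)$, so (\ref{c36}) together with $F_0(e\lambda)=F_0(s(e))+F_0(\lambda)$ and $r(e\lambda)=r(\lambda)$ gives $\nu(Z(\lambda))=m(Z(e\lambda))=e^{-\beta F_0(s(e))}m(Z(\lambda))$.

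Since $G$ is row-finite, $C_{r(e)}$ is compact, so $m|_{C_{r(e)}}$ and $\nu$ are \emph{finite} regular Borel measures; as the cylinders contained in $C_{r(e)}$ form a base of compact open sets closed under intersection and generating the Borel $\sigma$-algebra, the agreement on cylinders from step (iii) forces $m|_{C_{r(e)}}=e^{\beta F_0(s(e))}\nu$ by the standard uniqueness (Dynkin) argument. Unwinding the reductions then yields (\ref{d81}) for all admissible $A$, completing the converse. The only real subtlety lies in the bookkeeping of steps (i)--(iii); the measure-theoretic uniqueness is routine once finiteness is secured through compactness of $C_{r(e)}$.
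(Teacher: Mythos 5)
Your proof is correct, and it organizes the two implications differently from the paper. The paper proves the forward direction directly from the groupoid definition (\ref{c31}): it applies conformality to the bi-sections $\left\{(p,1,\sigma(p)) : p \in Z(\mu)\right\}$ and runs an induction on $|\mu|$, never passing through $e^{\beta F}$-conformality; you instead route \emph{both} directions through Lemma \ref{d80}, so the lemma reduces to the equivalence of $e^{\beta F}$-conformality with (\ref{c36}), and the forward implication becomes a one-line application of the iterated identity (\ref{d82}) to $A = Z(\mu)$, using that $\sigma^{|\mu|}$ maps $Z(\mu)$ bijectively onto $C_{r(\mu)}$ and that $F$ sums to $F_0(\mu)$ along the cylinder. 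For the converse, both arguments rest on the same uniqueness principle for measures agreeing on cylinders, but implement it differently: the paper fixes an arbitrary open set $V$ on which $\sigma$ is injective, compares $\nu_1(B) = m(\sigma(B))$ with $\nu_2(B) = \int_B e^{\beta F}\, dm$, and invokes regularity to pass from cylinders to all Borel subsets of $V$, finishing via the first half of the proof of Lemma \ref{d80}; you decompose an injective Borel set by its first edge, push $m|_{Z(e)}$ forward under the homeomorphism $\sigma|_{Z(e)} : Z(e) \to C_{r(e)}$, and apply Dynkin's theorem to two finite measures on the compact set $C_{r(e)}$. Your version trades the paper's appeal to regularity of the auxiliary measures for compactness of $C_v$ --- a genuine use of row-finiteness and the absence of sinks --- which makes the uniqueness step elementary because finiteness is automatic; the paper's version needs only regularity and so does not lean on compactness of the sets $C_v$. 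One small point you should make explicit in the Dynkin argument: the two finite measures must also agree on the whole space $C_{r(e)}$, which follows from finite additivity since $C_{r(e)}$ is a finite disjoint union of the cylinders $Z(f)$ with $s(f) = r(e)$.
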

\begin{proof} To prove that condition (\ref{c31}) in the present setting implies (\ref{c36}), consider
  first an edge $e$ in $G$ and apply
  (\ref{c31}) to the bi-section
$$
W = \left\{(p, 1, \sigma(p)) : \ p\in Z(e) \right\}
$$
to conclude that (\ref{c36}) holds when $\mu = e$. We can then prove
(\ref{c36}) by induction in the length $|\mu|$ of $\mu$: Assume that
(\ref{c36}) holds when $|\mu| = n$. Let $\mu = e_1e_2\cdots e_{n+1}$ be
a path of length $n+1$. The set
$$
U = \left\{ (p,1,\sigma(p)) : \ p \in  Z(\mu)  \right\}
$$
is a bi-section in $\Gamma_{\sigma}$ such that $s\left(U\right) =
Z\left( e_2e_3 \cdots e_{n+1}\right)$ and $r( U) = Z(\mu)$. It follows
therefore from (\ref{c31}) that
$$
m(Z(\mu))e^{\beta F_0(s(e_1))}  =  m\left( Z\left( e_2e_3 \cdots e_{n+1}\right)\right) .
$$
By induction hypothesis $ m\left( Z\left( e_2e_3 \cdots
    e_{n+1}\right)\right) = \exp (-\beta \sum_{i=2}^{n+1} F_0(s(e_i)))
m\left(C_{r(e_{n+1})}\right)$, and it follows therefore that
$m\left(Z(\mu)\right) = e^{-\beta F_0(\mu)}m\left(C_{r(\mu)}\right)$,
as desired.

Conversely, assume that (\ref{c36}) holds. It follows
straightforwardly that 
$$
m\left(\sigma\left(Z(\mu)\right)\right) = e^{\beta F_0(s(\mu))}
m(Z(\mu))
$$
for every cylinder $Z(\mu)$, which means that (\ref{d81}) holds when
$A$ is a cylinder. Consider then an open subset $V$ of $P(G)$ such
that $\sigma: V \to P(G)$ is injective. Define Borel measures $\nu_1$
and $\nu_2$ on $V$ such that
$$
\nu_1(B) = m(\sigma(B)), \ \nu_2(B) = \int_B e^{\beta F(x)} \ dm(x) .
$$ 
Since $\nu_1$ and $\nu_2$ agree on cylinder sets it follows from
regularity that $\nu_1$ and $\nu_2$ agree
on all Borel subsets of $V$. In particular, (\ref{d81}) holds for $V$,
and it follows as in the first part of the proof of Lemma \ref{d80}
that $m$ is $e^{\beta F}$-conformal. The same lemma then says that
$m$ is $(\Gamma_{\sigma},c_F)$-conformal. 

\end{proof}

Let $A = \left(A_{vw}\right)$ be the adjacency matrix of $G$, i.e. for
$v,w \in V_G$ we set
$$
A_{vw} = \# \left\{ e \in E_G : \ s(e) = v,  \ r(e) = w \right\} .
$$
In the following we let $\mathbb R^{V_G}$ denote the vector space of
  all functions $\xi : V_G \to \mathbb R$. Since $G$ is row-finite the
  adjacency matrix defines a linear map $A : \mathbb R^{V_G} \to
  \mathbb R^{V_G}$ in the usual way.

\begin{prop}\label{b1} The association $m \mapsto m(C_v)$ is
  a bijective correspondence between $(\Gamma_{\sigma},c_F)$-conformal
  Borel measures with exponent $\beta$ and the non-zero
  elements $\xi \in \mathbb R^{V_G}$ which satisfy the
  conditions
\begin{enumerate}
\item[a)] $\xi_v \geq 0$ and
\item[b)] $\sum_{w \in V}  A_{vw} \xi_w = e^{\beta F_0(v)} \xi_v$ 
\end{enumerate} 
for all $v \in V_G$.
\end{prop}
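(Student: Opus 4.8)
The plan is to lean on \reflemma{f14}, which reduces $(\Gamma_{\sigma},c_F)$-conformality to the single identity $m(Z(\mu)) = e^{-\beta F_0(\mu)} m(C_{r(\mu)})$, and to pass back and forth between such a measure and the vector $\xi_v = m(C_v)$.

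First I would check that $m \mapsto (m(C_v))_v$ lands in the asserted set of vectors. Non-negativity of $\xi_v = m(C_v)$ is immediate. For the eigenvalue equation I partition $C_v$ according to the first edge of a path: since $G$ is row-finite without sinks, $C_v = \bigsqcup_{e : s(e) = v} Z(e)$ is a finite disjoint union, so $m(C_v) = \sum_{e : s(e) = v} m(Z(e))$. Applying \reflemma{f14} to each single-edge cylinder gives $m(Z(e)) = e^{-\beta F_0(v)} m(C_{r(e)})$ (using $F_0(e) = F_0(s(e)) = F_0(v)$), and grouping the edges out of $v$ by their range turns $\sum_{e : s(e) = v} m(C_{r(e)})$ into $\sum_{w} A_{vw}\, m(C_w)$. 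This yields $e^{\beta F_0(v)} \xi_v = \sum_w A_{vw} \xi_w$, which is condition b). Finally $\xi \neq 0$, for otherwise all cylinders would be $m$-null and, by regularity, $m$ would vanish.

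Injectivity is then essentially free: by \reflemma{f14} every cylinder value $m(Z(\mu)) = e^{-\beta F_0(\mu)} \xi_{r(\mu)}$ is determined by $\xi$, and since $m$ is a regular Borel measure and the cylinders form a base of the topology, $m$ is determined by its values on cylinders. This same formula tells me exactly which measure to build for surjectivity.

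The real work is surjectivity. Given $\xi$ satisfying a) and b), I would define a set function on cylinders by $m_0(Z(\mu)) = e^{-\beta F_0(\mu)}\xi_{r(\mu)}$ and promote it to a regular Borel measure. The cylinders form a semiring of compact open sets -- here row-finiteness is essential, since it makes each $Z(\mu)$ compact -- so the key point is consistency under refinement. It suffices to verify the one-step relation over $Z(\mu) = \bigsqcup_{e : s(e) = r(\mu)} Z(\mu e)$, where $\mu e$ denotes $\mu$ extended by an edge $e$ with $s(e) = r(\mu)$: using $F_0(\mu e) = F_0(\mu) + F_0(r(\mu))$ and then condition b) in the form $\sum_{e : s(e) = r(\mu)} \xi_{r(e)} = \sum_w A_{r(\mu),w}\xi_w = e^{\beta F_0(r(\mu))}\xi_{r(\mu)}$, the sum $\sum_{e} m_0(Z(\mu e))$ collapses exactly to $e^{-\beta F_0(\mu)}\xi_{r(\mu)} = m_0(Z(\mu))$. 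Any two finite cylinder partitions of a compact open set share a common refinement into cylinders of a fixed length, so iterating this computation gives finite additivity on the ring generated by the cylinders; compactness of cylinders then upgrades finite to countable additivity, and the Carath\'eodory extension produces a Borel measure $m$, regular because $P(G)$ is second countable locally compact Hausdorff and $m$ is finite on compact sets. The same first-edge decomposition as above, combined with b), shows $m(C_v) = \xi_v$, whence $m(Z(\mu)) = e^{-\beta F_0(\mu)} m(C_{r(\mu)})$ and conformality follows from \reflemma{f14}. The main obstacle to watch is precisely this extension step -- verifying that the content is consistent across all cylinder decompositions and genuinely countably additive -- and it is exactly row-finiteness, through compactness of the cylinders, that makes it go through.
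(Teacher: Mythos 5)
Your proof is correct and follows essentially the same route as the paper's: both directions rest on Lemma~\ref{f14}, condition b) comes from the first-edge decomposition of $C_v$, injectivity from determination on cylinders plus regularity, and surjectivity from defining $m(Z(\mu)) = e^{-\beta F_0(\mu)}\xi_{r(\mu)}$ and checking the one-step consistency relation via b). The only difference is that you spell out the extension step (semiring of compact open cylinders, common refinements, countable additivity via compactness, Carath\'eodory), which the paper delegates to standard measure theory with a citation to \cite{Pa}.
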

\begin{proof} For any measure $m$ on $P(G)$ the vector $\xi_v
  = m\left(C_v\right)$ trivially satisfies a). Property b) follows
  from (\ref{c36}) since it implies that
$$
m(C_v) = \sum_{\{e\in E_G : \ s(e) = v\}} \ m(Z(e)) \ = \ \sum_{w \in V}
A_{vw} e^{-\beta F_0(v)}m(C_w) .
$$
If $m_1$ and $m_2$ are $(\Gamma_{\sigma},c_F)$-conformal
  measures such that $m_1(C_v) = m_2(C_v)$ for all $v \in V_G$ it
  follows from Lemma \ref{f14} that $m_1$ and $m_2$ agree on cylinder
  sets, and then by regularity on every Borel subset. To complete the proof we must show that
  any element $\xi \in \mathbb R^{V_G}$ which satisfies a) and b) comes from a
  $(\Gamma_{\sigma},c_F)$-conformal measure. To see this set
$$
m(Z(\mu)) = e^{-\beta F_0(\mu)} \xi_{r(\mu)}.
$$
It follows from condition b) that 
$$
\sum_{\left\{ e \in E_G : \ s(e) = r(\mu)\right\}} m(Z(\mu e)) = m(Z(\mu)).
$$
Thanks to this relation we can for each fixed vertex $v$ define a measure $m$ on
the algebra of sets generated by the cylinder sets in $C_v$ such that $m(C_v) = \xi_v$,
and then standard measure theory methods applies to show that $m$
extends to a regular Borel measure $m$ on $P(G)$, cf. e.g. \cite{Pa}. 
\end{proof}

\begin{thm}\label{THM} There is a bijective correspondence between the gauge-invariant
  $\beta$-KMS weights for $\alpha^F$ and non-zero
  elements $\xi \in \mathbb R^{V_G}$ satisfying conditions
  a) and b) of Proposition \ref{b1}.
\end{thm}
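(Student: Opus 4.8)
The plan is to assemble this theorem directly by composing the two bijections already established earlier in this section and in the preceding one. The statement asserts a bijective correspondence between gauge-invariant $\beta$-KMS weights for $\alpha^F$ and non-zero $\xi \in \mathbb R^{V_G}$ satisfying conditions a) and b) of Proposition \ref{b1}. I would observe that the class of objects sitting in the middle---$(\Gamma_{\sigma}, c_F)$-conformal Borel measures $m$ on $P(G)$ with exponent $\beta$---has already been shown to biject with \emph{both} endpoints of the desired correspondence.

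More precisely, Proposition \ref{c84} (the specialization of Proposition \ref{c35} to homomorphisms of the form $c_F$) gives a bijection between regular $e^{\beta F}$-conformal measures $m$ on $P(G)$ and the gauge-invariant $\beta$-KMS weights $\varphi_m$ for $\alpha^F = \sigma^{c_F}$ on $C^*(G) = C^*_r(\Gamma_{\sigma})$. By Lemma \ref{d80}, a regular Borel measure $m$ is $(\Gamma_{\sigma}, c_F)$-conformal with exponent $\beta$ if and only if it is $e^{\beta F}$-conformal, so Proposition \ref{c84} equivalently furnishes a bijection $m \mapsto \varphi_m$ between $(\Gamma_{\sigma}, c_F)$-conformal measures with exponent $\beta$ and the gauge-invariant $\beta$-KMS weights. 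On the other side, Proposition \ref{b1} provides a bijection $m \mapsto (m(C_v))_{v \in V_G}$ between $(\Gamma_{\sigma}, c_F)$-conformal measures with exponent $\beta$ and the non-zero $\xi \in \mathbb R^{V_G}$ satisfying a) and b).

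The theorem then follows by composing the inverse of the first bijection with the second: the map
$$
\varphi_m \longmapsto \bigl(m(C_v)\bigr)_{v \in V_G}
$$
is a well-defined bijection from gauge-invariant $\beta$-KMS weights for $\alpha^F$ onto the set of non-zero $\xi \in \mathbb R^{V_G}$ satisfying conditions a) and b), being a composite of two bijections with the same intermediate domain. This is essentially a one-line argument once the two component results are in hand.

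The only point requiring any care---and hence the step I would flag as the main (though minor) obstacle---is confirming that the intermediate class of measures is literally identical in the two propositions, rather than merely parallel. Proposition \ref{c84} is phrased in terms of $e^{\beta F}$-conformal measures while Proposition \ref{b1} is phrased in terms of $(\Gamma_{\sigma}, c_F)$-conformal measures, and it is exactly Lemma \ref{d80} that identifies these two classes. I would therefore make the invocation of Lemma \ref{d80} explicit, so that the composition is seen to route through a single, unambiguous set of measures. Beyond that, there is nothing to prove: no new analysis of the KMS condition or of conformality is needed, since all of it has been discharged in Proposition \ref{c11}, Theorem \ref{e1}, and the intervening lemmas.
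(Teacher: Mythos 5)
Your proposal is correct and takes essentially the same approach as the paper, whose entire proof reads: ``Combine Proposition \ref{c35} with Proposition \ref{b1}.'' The only (harmless) difference is that you route through Proposition \ref{c84} and Lemma \ref{d80}, whereas the paper invokes Proposition \ref{c35} directly with $c = c_F$, which is already phrased in terms of $(\Gamma_{\sigma},c_F)$-conformal measures and therefore composes with Proposition \ref{b1} without any translation step.
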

\begin{proof} Combine Proposition \ref{c35} with Proposition \ref{b1}.
\end{proof}

\begin{cor}\label{THMcor}  There is a bijective correspondence between
  the gauge-invariant $\beta$-KMS states for $\alpha^F$ and non-zero elements $\xi \in
 \mathbb R^{V_G}$ satisfying conditions
  a) and b) of Proposition \ref{b1} plus the additional condition that
\begin{enumerate}
\item[c)] $\sum_{v \in V_G} \xi_v = 1$. 
\end{enumerate} 
\end{cor}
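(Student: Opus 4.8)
The plan is to obtain the corollary by merely restricting the bijection already established in Theorem \ref{THM}. A $\beta$-KMS state is, by definition, a $\beta$-KMS weight $\psi$ satisfying the normalization $\sup\{\psi(a) : 0 \le a \le 1\} = 1$, and Theorem \ref{THM} identifies every gauge-invariant $\beta$-KMS weight for $\alpha^F$ with a unique weight $\varphi_m$, equivalently with a unique non-zero $\xi \in \mathbb R^{V_G}$ satisfying a) and b) via $\xi_v = m(C_v)$. Since a gauge-invariant $\beta$-KMS state is exactly such a weight $\varphi_m$ that happens to be a state, it suffices to show that the normalization condition translates precisely into condition c). I would do this by proving
$$
\sup\{\varphi_m(a) : 0 \le a \le 1\} = m(P(G)) = \sum_{v \in V_G} \xi_v,
$$
after which the corollary follows at once.

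The second equality is the routine half. The cylinders $C_v$, $v \in V_G$, form a countable partition of $P(G)$ into clopen sets, since each infinite path has a unique source vertex and, by row-finiteness, each $C_v$ is a finite union of (clopen) cylinders $Z(e)$. Countable additivity of $m$ therefore gives $m(P(G)) = \sum_{v} m(C_v) = \sum_v \xi_v$.

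The first equality is the only point that needs genuine care, and I would establish the two inequalities separately. For $\leq$, I use that the conditional expectation $P$ is positive and contractive: if $0 \le a \le 1$ then $P(a) \ge 0$ and $\left\|P(a)\right\|_\infty \le \left\|a\right\| \le 1$, so $0 \le P(a) \le 1$ as a function on $P(G)$, whence $\varphi_m(a) = \int_{P(G)} P(a)\, dm \le m(P(G))$. Here the contractivity of $P$ in this \emph{non-unital} setting is what lets me bound $P(a)$ by $1$ even though $1 \notin C^*(G)$. For $\geq$, I take a positive approximate unit $\{g_k\} \subseteq C_c(P(G))$ with $0 \le g_k \le 1$ and $g_k \nearrow 1$, which exists because $P(G)$ is locally compact Hausdorff. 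Each $g_k$ lies in $C_0(\mathcal G^{(0)})$ and is hence fixed by $P$, so $\varphi_m(g_k) = \int_{P(G)} g_k\, dm$, and monotone convergence gives $\int_{P(G)} g_k\, dm \to m(P(G))$; since $0 \le g_k \le 1$, this forces the supremum to be at least $m(P(G))$.

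Combining the two displays, $\varphi_m$ is a state precisely when $m(P(G)) = 1$, that is, when $\sum_{v \in V_G} \xi_v = 1$, which is condition c). Consequently the bijection of Theorem \ref{THM} restricts to a bijection between the gauge-invariant $\beta$-KMS states and those $\xi$ satisfying a), b) and c), as asserted. The supremum identity is the heart of the argument; the remaining steps are bookkeeping on top of Theorem \ref{THM}.
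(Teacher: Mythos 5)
Your proposal is correct and follows exactly the route the paper intends: the corollary is stated as an immediate restriction of the bijection in Theorem \ref{THM}, with the only substantive point being that, under the correspondence $\psi=\varphi_m$, the normalization $\sup\{\psi(a): 0\le a\le 1\}=1$ defining a KMS state is equivalent to $m(P(G))=\sum_{v\in V_G}\xi_v=1$. Your verification of the supremum identity (contractivity and positivity of $P$ for the upper bound, an increasing approximate unit in $C_c(P(G))$ plus monotone convergence for the lower bound, and countable additivity over the clopen partition $P(G)=\bigsqcup_v C_v$) correctly supplies the details the paper leaves implicit.
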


When $F_0$ is either strictly positive everywhere or strictly negative
everywhere, both Theorem \ref{THM} and its corollary remain true
with the word 'gauge-invariant' deleted, simply because all
KMS weights are then automatically gauge-invariant. In that case
Corollary \ref{THMcor} has been obtained by Carlsen and Larsen by
different methods, \cite{Ca}.

\subsection{KMS weights for the gauge action on graph algebras}

In this section we specialize to the case where $F_0$ is constant $1$ which means that
the action $\alpha^F$ becomes the gauge action $\gamma$, defined such
that $\gamma_t(f)(p,k,q) = e^{ikt} f(p,k,q)$ when $f \in
C_c(\Gamma_{\sigma})$. In this case the previous results take the following form.

\begin{thm}\label{THM2} There is a bijective correspondence between
  the $\beta$-KMS weights for the gauge action on $C^*(G)$ and non-zero
  elements $\xi \in \mathbb R^{V_G}$ satisfying 
\begin{enumerate}
\item[a')] $\xi_v \geq 0$ and
\item[b')] $\sum_{w \in V_G}  A_{vw} \xi_w = e^{\beta} \xi_v$ 
\end{enumerate} 
for all $v \in V_G$.
\end{thm}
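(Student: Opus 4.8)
The plan is to obtain Theorem \ref{THM2} as the specialization $F_0 \equiv 1$ of the results already established, the only genuine point being the removal of the qualifier ``gauge-invariant'' that appears in Theorem \ref{THM}. First I would record that when $F_0$ is the constant function $1$ the associated function $F(p) = F_0(s(p))$ is identically $1$, so that the homomorphism $c_F$ coincides with the gauge cocycle $c_0$ defined by $c_0(p,k,q) = k$, and hence $\alpha^F = \sigma^{c_F} = \sigma^{c_0} = \gamma$ is exactly the gauge action.

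With this identification in hand, the correspondence of Theorem \ref{THM} applies with $F_0 \equiv 1$: its condition b), namely $\sum_{w} A_{vw}\xi_w = e^{\beta F_0(v)}\xi_v$, becomes $\sum_{w} A_{vw}\xi_w = e^{\beta}\xi_v$, which is condition b') above, while condition a) is condition a'). Thus Theorem \ref{THM} already yields a bijection between the \emph{gauge-invariant} $\beta$-KMS weights for $\gamma$ and the non-zero vectors satisfying a') and b'). It remains to see that \emph{every} $\beta$-KMS weight for the gauge action is gauge-invariant, so that the adjective may be deleted. This is immediate here because the defining action of the KMS condition is itself the gauge action: condition i) in the definition of a $\beta$-KMS weight $\psi$ for $\gamma$ says precisely that $\psi \circ \gamma_t = \psi$ for all $t$, which is the definition of gauge-invariance. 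Equivalently, one may invoke Corollary \ref{e22} directly with $c = c_0$: since $\ker c_0 \cap (\Gamma_\sigma)^x_x = \{x\}$ (an isotropy element $(x,k,x)$ lies in $\ker c_0$ only when $k = 0$) and $\ker c_0$ is open, that corollary produces a bijection between $(\Gamma_\sigma,c_0)$-conformal measures of exponent $\beta$ and \emph{all} $\beta$-KMS weights for $\gamma$, each such weight thereby factoring through the conditional expectation $P$.

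Finally I would compose this measure-to-weight bijection with the bijection $m \mapsto (m(C_v))_{v\in V_G}$ of Proposition \ref{b1}, specialized to $F_0 \equiv 1$, to obtain the stated correspondence $\psi = \varphi_m \mapsto (m(C_v))_v$ onto non-zero vectors $\xi \in \mathbb{R}^{V_G}$ satisfying a') and b'). I anticipate no real obstacle: the analytic substance is entirely contained in Theorem \ref{e1} and Corollary \ref{e22}, and the combinatorial translation of conformality into the eigenvalue equation is Proposition \ref{b1}. The only step warranting a word of justification is the deletion of ``gauge-invariant'', which rests on the observation above that in the pure gauge case the KMS-defining action and the gauge action coincide.
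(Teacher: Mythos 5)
Your proposal is correct and takes essentially the same route as the paper, which states Theorem \ref{THM2} as the specialization $F_0 \equiv 1$ of Theorem \ref{THM} (i.e.\ of Proposition \ref{c35} combined with Proposition \ref{b1}) and deletes ``gauge-invariant'' on the grounds that all $\beta$-KMS weights are then automatically gauge-invariant. Your justification of that deletion is exactly right on both counts: since $c_F = c_0$ when $F_0 \equiv 1$, the invariance required in condition i) of the KMS definition \emph{is} gauge-invariance, and alternatively Corollary \ref{e22} applies directly with $c = c_0$ because $\ker c_0 = \Gamma_{\sigma}(0)$ is open with trivial isotropy.
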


\begin{cor}\label{THMcor2}  There is a bijective correspondence between
  the $\beta$-KMS states for the gauge action on $C^*(G)$ and non-zero
  elements $\xi \in
 \mathbb R^{V_G}$ satisfying conditions a') and b') of Theorem
 \ref{THM2}
plus the additional condition c) of Corollary \ref{THMcor}.
\end{cor}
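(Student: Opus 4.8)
The plan is to obtain the corollary by identifying, inside the bijection of Theorem~\ref{THM2}, those eigenvectors whose associated KMS weight is in fact a KMS state. Recall that the weight attached to a non-zero $\xi$ satisfying a') and b') is $\varphi_m$, where $m$ is the regular Borel measure on $P(G)$ determined by $m(C_v) = \xi_v$, and that $\varphi_m(a) = \int_{P(G)} P(a)\, dm$. By the definition adopted in Section~2, $\varphi_m$ is a $\beta$-KMS state exactly when
\[
\sup\{\varphi_m(a) : \ 0 \le a \le 1\} = 1 .
\]
Thus the whole task reduces to showing that this supremum equals $\sum_{v \in V_G}\xi_v$; condition c) is then precisely the demand that the supremum be $1$.

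First I would evaluate the supremum and show that it equals the total mass $m(P(G))$. For the upper bound I use that $P$ is a positive contraction: if $0 \le a \le 1$ then $0 \le P(a) \le 1$ pointwise as a function on $P(G)$, so $\varphi_m(a) = \int_{P(G)} P(a)\, dm \le m(P(G))$. For the reverse inequality I would test against functions $a \in C_c(P(G))$ with $0 \le a \le 1$; since such $a$ already live on the unit space, $P(a) = a$ and hence $\varphi_m(a) = \int_{P(G)} a\, dm$. Choosing an increasing sequence of such $a$ converging pointwise to the constant function $1$, the monotone convergence theorem gives $\int_{P(G)} a\, dm \nearrow m(P(G))$. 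Since these $a$ form a subfamily of all admissible elements, the two bounds combine to give $\sup\{\varphi_m(a): 0 \le a \le 1\} = m(P(G))$, with the value $+\infty$ allowed.

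Next I would rewrite the total mass in terms of $\xi$. The cylinders $C_v$, $v \in V_G$, introduced in (\ref{k11}), are clopen, pairwise disjoint, and cover $P(G)$, since every infinite path has a well-defined initial vertex $s(p)$. Countable additivity of $m$ then yields $m(P(G)) = \sum_{v \in V_G} m(C_v) = \sum_{v \in V_G}\xi_v$. Combined with the supremum computation, $\varphi_m$ is a $\beta$-KMS state if and only if $\sum_{v \in V_G}\xi_v = 1$, which is condition c). As the KMS-weight condition is already guaranteed by Theorem~\ref{THM2} and the state condition constrains only the total mass, the bijection of that theorem restricts to the asserted bijection between $\beta$-KMS states and normalized eigenvectors.

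I expect the only delicate point to be the evaluation of the supremum in the non-unital setting: one must verify that testing against compactly supported functions on the unit space, rather than against arbitrary positive contractions in $C^*(G)$, already attains the supremum. This is exactly what the approximate-unit/monotone-convergence argument secures, the crucial facts being that $P$ fixes $C_c(P(G))$ and that $\{C_v\}_{v \in V_G}$ is a partition into clopen sets.
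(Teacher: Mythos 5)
Your proposal is correct and is exactly the argument the paper leaves implicit: Corollary~\ref{THMcor2} is stated without proof because, given Theorem~\ref{THM2} and the paper's definition of a $\beta$-KMS state as a $\beta$-KMS weight $\psi$ with $\sup\{\psi(a) : 0 \le a \le 1\} = 1$, one only needs to observe that this supremum for $\varphi_m$ equals $m(P(G)) = \sum_{v \in V_G} m(C_v) = \sum_{v \in V_G} \xi_v$. Your verification of that identity (contractivity and positivity of $P$ for the upper bound, testing on an increasing sequence in $C_c(P(G))$ with monotone convergence for the lower bound, and countable additivity over the clopen partition $\{C_v\}_{v\in V_G}$) is sound and fills in the details correctly.
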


\subsection{KMS weights for the gauge action when the graph is
  cofinal}

Recall that $G$ is \emph{cofinal} when every vertex can reach every
infinite path, i.e. for all $v \in V_G$ and all $p \in P(G)$ there is
a finite path $\mu$ in $G$ such that $s(\mu) =v$ and $r(\mu) = s(p_k)$
for some $k \in \mathbb N$. Recall also that simplicity of $C^*(G)$
implies cofinality of $G$, and that the converse is almost also true,
cf. Proposition 5.1 in \cite{BPRS}. As shown in \cite{Th3} the theory of
positive eigenvalues and eigenvectors of non-negative matrices, which
is well known in the irreducible row-finite case, has a natural
generalization to a class of matrices which comprises the adjacency
matrix $A$ of $G$, provided $G$ is cofinal. This allows us here to
obtain a complete description of the possible $\beta$-values for
which there is a $\beta$-KMS weight for the gauge action on $C^*(G)$
when $G$ is cofinal, and also to obtain a description of the
corresponding eigenvectors of $A$ and in this way the regular measures on
$P(G)$ which correspond to such $\beta$-KMS weights, at least in
principle. 

To formulate the results, let $NW_G$ be the (possibly empty) set of
vertexes in $G$ that support a loop; i.e. $v \in NW_G$ if and only if
there is a finite path $\mu$ in $G$ such that $s(\mu) = r(\mu) =
v$. Assuming that $G$ is cofinal, the set $NW_G$ and the edges emitted
from any element of $NW_G$ constitute a (still possibly empty)
irreducible subgraph of $G$ which we call \emph{the non-wandering part
  of $G$}. It follows that if $v \in NW_G$, the
number
$$
\beta_0 = \log \left( \limsup_{n \to \infty}
  \left(A^n_{vv}\right)^{\frac{1}{n}}\right),
$$
defined with the convention that $\log \infty = \infty$, is independent of the choice of $v$.

\begin{thm}\label{betavalues}  Assume that
  $G$ is cofinal and let $\beta \in \mathbb R$.
\begin{enumerate}
\item[1)] Assume that the non-wandering part $NW_G$ is empty. There is
  a $\beta$-KMS weight for the gauge action on $C^*(G)$ for all $\beta \in \mathbb R$. 
\item[2)]  Assume that the non-wandering part $NW_G$ is non-empty and
  finite. There is a $\beta$-KMS weight for the gauge action on
  $C^*(G)$ if and only if $\beta = \beta_0$.
\item[3)]  Assume that the non-wandering part $NW_G$ is non-empty and
  infinite. There is a $\beta$-KMS weight for the gauge action on
  $C^*(G)$ if and only if $\beta \geq \beta_0$.
\end{enumerate}
\end{thm}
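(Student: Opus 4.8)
The plan is to reduce Theorem~\ref{betavalues} to the eigenvalue problem solved in the companion paper \cite{Th3} via Theorem~\ref{THM2}, and then translate the spectral information about the adjacency matrix $A$ into statements about the existence of $\beta$-KMS weights. By Theorem~\ref{THM2}, a $\beta$-KMS weight for the gauge action exists if and only if $A$ admits a non-zero non-negative vector $\xi$ with $A\xi = e^{\beta}\xi$; equivalently, $\lambda = e^{\beta}$ is a positive eigenvalue of $A$ with a non-negative eigenvector. Thus I would first restate the problem as: \emph{for which $\lambda > 0$ does the cofinal row-finite matrix $A$ have a non-negative eigenvector with eigenvalue $\lambda$?} The quantity $e^{\beta_0} = \limsup_n (A^n_{vv})^{1/n}$ is exactly the natural spectral radius associated to the irreducible non-wandering part, so $\beta \geq \beta_0$ translates to $\lambda \geq e^{\beta_0}$.

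For part 1), when $NW_G = \emptyset$ the graph has no loops, so every vertex is wandering. Here I would argue directly: since $G$ is cofinal with no loops, one can construct a non-negative eigenvector for \emph{every} $\lambda > 0$ by working along the path structure of $G$, the point being that without loops the eigenvalue equation $\sum_w A_{vw}\xi_w = \lambda \xi_v$ can be solved recursively with no compatibility obstruction; this is precisely the kind of result established in \cite{Th3}. For parts 2) and 3), the non-wandering part is a non-empty irreducible subgraph, and the classical and generalized Perron--Frobenius theory for irreducible non-negative matrices governs the answer. When $NW_G$ is finite, irreducibility forces the eigenvector to be strictly positive on $NW_G$, and a finite irreducible matrix has a \emph{unique} positive eigenvalue admitting a non-negative eigenvector, namely its spectral radius $e^{\beta_0}$; this gives $\beta = \beta_0$. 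When $NW_G$ is infinite, the theory of countable-state irreducible non-negative matrices (the results of Pruitt \cite{P} and Vere-Jones \cite{V}, as organized in \cite{Th3}) shows that non-negative $\lambda$-eigenvectors exist exactly for $\lambda \geq e^{\beta_0}$, i.e.\ for all $\lambda$ at or above the spectral radius, which yields $\beta \geq \beta_0$.

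The main structural work, which I would quote rather than reprove, is the passage from the behaviour of $A$ restricted to the irreducible non-wandering part to the existence of a \emph{global} non-negative eigenvector on all of $V_G$. Given a non-negative eigenvector on $NW_G$, one must extend it to the wandering vertexes so that condition b') holds everywhere, and conversely one must show that any global non-negative eigenvector restricts to a well-behaved eigenvector on $NW_G$. Cofinality is the hypothesis that makes this extension/restriction dictionary work: it guarantees that the wandering part feeds into the non-wandering part in a controlled way, so that the summability needed to define $\xi_v$ at a wandering vertex $v$ is exactly governed by whether $\lambda \geq e^{\beta_0}$. I expect this extension step to be the crux, and it is the content of the generalized eigenvalue theory for cofinal matrices developed in \cite{Th3}; with that theory in hand the three cases follow by matching $e^{\beta}$ against the admissible eigenvalue set $[e^{\beta_0}, \infty)$ (infinite $NW_G$), the single point $\{e^{\beta_0}\}$ (finite $NW_G$), or all of $(0,\infty)$ (empty $NW_G$).
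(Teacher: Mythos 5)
Your proposal is correct and takes essentially the same route as the paper: the paper's entire proof is the one-line instruction to combine Theorem~\ref{THM2} with the eigenvalue theorem for cofinal non-negative matrices in the companion paper \cite{Th3} (cited there, with an apparent typo, as ``Theorem 2.7 in \cite{Th2}''), which is precisely your reduction of the KMS problem to the question of which $\lambda = e^{\beta}$ admit non-negative eigenvectors. Your sketches of the three cases (recursive solvability when $NW_G = \emptyset$, Perron--Frobenius when $NW_G$ is finite, Pruitt/Vere-Jones when $NW_G$ is infinite, with cofinality driving the extension from $NW_G$ to the wandering vertexes) describe the content of that quoted theorem rather than replace it, exactly as in the paper.
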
 
\begin{proof} Combine Theorem \ref{THM2} with Theorem 2.7 in \cite{Th2}.
\end{proof}

When $\beta = \beta_0$ and $\sum_{n=0}^{\infty} A^n_{vv} e^{-n\beta_0}
= \infty$, which is automatic in case 2), the $\beta_0$-KMS weight is
unique up to scalar multiplication. In all other cases there can 
be more than one extremal $\beta$-KMS weight, reflecting that the positive
$e^{\beta}$-eigenvectors of $A$ are not generally a scalar multiply
of each other, cf. \cite{Th3}. In \cite{Th3} it is shown how to generalize the well-known
Poisson-Martin integral representation of the harmonic functions of a
countable state
Markov chain to give an integral representation of the
positive $e^{\beta}$-eigenvectors of $A$. This leads in particular
to a description of the extremal rays in the set of positive
$e^{\beta}$-eigenvectors for $A$, and hence also, at least in
principle, a description of the extremal $\beta$-KMS weights. We refer
to Theorem 3.9 and Corollary 3.10 in \cite{Th3}.

In many cases the summability condition c) of Corollary \ref{THMcor}
prevents the existence of $\beta$-KMS states, although there may be
many $\beta$-KMS weights. See the last section for three different
examples of this.  

\subsection{On the factor type of an extremal KMS weight for the gauge action}\label{k40}

Given a weight $\psi$ on a $C^*$-algebra $A$ there is a GNS-type
construction consisting of a Hilbert space $H_{\psi}$, a linear map
$\Lambda_{\psi} : \mathcal N_{\psi} \to H_{\psi}$ with dense range and
a non-degenerate representation $\pi_{\psi}$ of $A$ on $H_{\psi}$ such that
\begin{enumerate}
\item[$\bullet$] $\psi(b^*a) = \left<
    \Lambda_{\psi}(a),\Lambda_{\psi}(b)\right>, \ a,b \in \mathcal
  N_{\psi}$, and
\item[$\bullet$] $\pi_{\psi}(a)\Lambda_{\psi}(b) = \Lambda_{\psi}(ab), \
  a \in A, \ b \in \mathcal N_{\psi}$,
\end{enumerate} 
cf. \cite{Ku},\cite{KV1},\cite{KV2}.
A $\beta$-KMS weight $\psi$ on $A$ is \emph{extremal} when the only
$\beta$-KMS weights $\varphi$ on $A$ with the property that
$\varphi(a) \leq \psi(a)$ for all $a \in A_+$ are scalar
multiples of $\psi$, viz. $\varphi = s\psi$ for some $s > 0$.

\begin{lemma}\label{k20} Let $A$ be a separable $C^*$-algebra and
  $\alpha$ a continuous one-parameter group of automorphisms on
  $A$. Let $\psi$ be an extremal $\beta$-KMS weight for $\alpha$. Then
  $\pi_{\psi}(A)''$ is a factor.
\end{lemma}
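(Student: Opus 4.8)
The plan is to prove that $\pi_\psi(A)''$ is a factor by identifying its center with a commutative von Neumann algebra that is forced to be trivial via the extremality hypothesis. The standard strategy for factoriality of GNS representations of extremal KMS states is to show that a central projection (or more generally a positive element affiliated with the center) yields a decomposition of the state into two KMS states, and extremality then collapses the decomposition. Here the same philosophy should work, but weights introduce extra care because $\psi$ is only defined on a dense $*$-subalgebra and need not be finite on $A$.

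First I would recall the relationship between the modular theory of the weight $\psi$ and the one-parameter group $\alpha$. The KMS condition says that $\alpha$ is implemented, on the GNS level, by the modular automorphism group of the weight; more precisely, $\pi_\psi(\alpha_t(a)) = \Delta^{it}\pi_\psi(a)\Delta^{-it}$ where $\Delta$ is the modular operator of the (faithful, semifinite, normal) weight $\widetilde{\psi}$ on $M := \pi_\psi(A)''$ induced by $\psi$ (up to the sign convention noted in the footnote). Thus the KMS condition for $\psi$ translates into $\widetilde\psi$ being a $\beta\sigma^{\widetilde\psi}$-KMS weight on $M$ in the von Neumann sense; see \cite{KV2}. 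I would set up this correspondence carefully using the results of Kustermans and Vaes cited in the excerpt.

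Next, suppose $z$ is a positive element of the center $Z(M) = M \cap M'$ with $z \leq 1$. Because $z$ is central and invariant under the modular group (the center is always fixed pointwise by $\sigma^{\widetilde\psi}$), the functional $\varphi := \widetilde\psi(z^{1/2} \,\cdot\, z^{1/2})$ is again a normal semifinite weight on $M$ satisfying the same KMS condition, and restricting along $\pi_\psi$ gives a $\beta$-KMS weight $\varphi\circ\pi_\psi$ on $A$ dominated by $\psi$. I would verify that $\varphi \circ \pi_\psi$ is indeed a proper weight dominated by $\psi$ on $A_+$; the lower semicontinuity and dense-definedness should follow from normality and semifiniteness of $\varphi$ together with the corresponding properties of $\psi$. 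Extremality of $\psi$ then forces $\varphi\circ\pi_\psi = s\psi$ for some scalar $s \geq 0$, and since $\pi_\psi(A)$ is $\sigma$-weakly dense in $M$ and both weights are normal, this gives $\widetilde\psi(z^{1/2}\,\cdot\,z^{1/2}) = s\,\widetilde\psi$ as weights on $M$. By faithfulness of $\widetilde\psi$ this pins down $z = s\cdot 1$, so every positive element of the center is a scalar; hence $Z(M) = \mathbb{C}1$ and $M$ is a factor.

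The main obstacle I anticipate is the passage between weights on $A$ and weights on $M = \pi_\psi(A)''$, and in particular showing that a central perturbation $\widetilde\psi(z^{1/2}\,\cdot\,z^{1/2})$ restricts to a genuine \emph{proper} $\beta$-KMS weight on $A$ that is dominated by $\psi$ \emph{as a weight on $A_+$}. For states this step is routine, but with unbounded weights one must control finiteness: the element $z$ need not preserve the domain $\mathcal N_\psi$ in an obvious way, and one must ensure the perturbed weight remains densely defined and lower semi-continuous, and that the inequality $\varphi\circ\pi_\psi \leq \psi$ genuinely holds on all of $A_+$ rather than merely on a dense cone. I would handle this by approximating $z$ with bounded functional-calculus elements and using the normality/lower-semicontinuity machinery of \cite{KV1} and \cite{KV2}, together with the monotone-convergence arguments already exploited in the proof of Theorem \ref{e1}, to transfer boundedness-by-$\psi$ from the von Neumann setting back to $A$.
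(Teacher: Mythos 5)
Your proposal follows essentially the same route as the paper's proof: extend $\psi$ to a normal faithful semifinite weight $\tilde{\psi}$ on $\pi_{\psi}(A)''$ via the Kustermans--Vaes machinery, note that the modular group (the extension of $t \mapsto \alpha_{-t\beta}$) fixes the center, perturb $\tilde{\psi}$ by a central element to get a $\beta$-KMS weight on $A$ dominated by $\psi$, and use extremality together with faithfulness to force triviality of the center. The only cosmetic difference is that the paper perturbs by a central \emph{projection} $e$ and concludes on the GNS level (from $\tilde{\psi}(e\pi_{\psi}(a)) = s\psi(a)$ one gets $e\Lambda_{\psi}(a) = s\Lambda_{\psi}(a)$, hence $e = s = 1$), whereas you use a general positive central $z$ and faithfulness of $\tilde{\psi}$; these are the same argument.
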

\begin{proof} It follows from \cite{KV1} that $\psi$ extends to normal
  faithful semi-finite weight $\tilde{\psi}$ on $\pi_{\psi}(A)''$ such
  that $\tilde{\psi} \circ \pi_{\psi} = \psi$, and
  that $t \mapsto \alpha_{-{t\beta}}$ extends to a
  $\sigma$-weakly continuous action $\theta$ on $\pi_{\psi}(A)''$ which is the
  modular automorphism group on $\pi_{\psi}(A)''$ associated to
  $\tilde{\psi}$. Let $e$ be a non-zero central projection in
  $\pi_{\psi}(A)''$ . Since $\theta$ acts trivially on the center it follows that $\theta_t(e) = e$ and it is then
  straightforward to verify that 
$$
A_+ \ni a \mapsto \tilde{\psi}(e \pi_{\psi}(a))
$$
is $\beta$-KMS weight on $A$. Since $\psi$ is extremal there is an $s >
0$ such that $\tilde{\psi}(e \pi_{\psi}(a)) = s\psi(a) =
\tilde{\psi}(s\pi_{\psi}(a))$ for all $a \in A_+$. This implies that $(e
-s)\Lambda_{\psi}(a) = \Lambda_{\psi}(a)$ for all $a \in \mathcal
N_{\psi}$, and hence that $e = s = 1$.    
\end{proof}

The next aim will be to determine the Connes invariant
$\Gamma\left(\pi_{\psi}\left(C^*(G)\right)''\right)$, assuming
that $\psi$ is an extremal $\beta$-KMS weight for the gauge action on $C^*(G)$. For this note
that it follows from Section 2.2 in \cite{KV1} that $\psi$ extends to a normal semi-finite faithful weight
$\tilde{\psi}$ on $\pi_{\psi}\left(C^*(G)\right)''$ such that $\psi =
\tilde{\psi} \circ \pi_{\psi}$, and that the modular group on
$\pi_{\psi}(C^*(G))''$ corresponding to $\tilde{\psi}$ is the
one-parameter group $\theta$ on $\pi_{\psi}(C^*(G))''$ given by
$$
\theta_t = \tilde{\gamma}_{-\beta t},
$$
where $\tilde{\gamma}$ is the
$\sigma$-weakly continuous extension of $\gamma$ defined such that
$\tilde{\gamma}_t \circ \pi_{\psi} = \pi_{\psi} \circ \gamma_t$. To
simplify the notation in the following, we set $M=
\pi_{\psi}(C^*(G))''$ and let $N \subseteq M$ be the fixed point
algebra of $\theta$, viz. $N = M^{\theta}$. 
Since $\theta_t = \theta_{t + \frac{2\pi}{\beta}}$ we can
define an action $\tilde{\theta}$ by the circle such that
$$
\tilde{\theta}_{e^{i t}} = \theta_{\frac{t}{\beta}}
$$
for all $t \in \mathbb R$. Note that
$\tilde{\theta}\left(C^*(G)\right) = C^*(G)$ and that the Arveson
spectrum $\Sp(qMq)$ of the restriction of
$\tilde{\theta}$ to $qMq$ is a subset of $\mathbb Z$ for all
projections $q \in N$. Let $v$ be a vertex in $G$ and $1_v
\in C_c(P(G))$
the characteristic function of the set $C_v$ from (\ref{k11}). For
simplicity of notation we also write $1_v$ for the image
$\pi_{\psi}(1_v) \in N$. By combining Definition 2.2.1 in \cite{C1} with Lemma 3.4.3, Proposition 2.2.2 and Lemme
2.3.3 in \cite{C1} it follows that  
\begin{equation}\label{j12}
\Gamma(M) = \beta \bigcap_e \Sp(e1_vM1_ve)
\end{equation}
where $v\in V_G$ can be any vertex and where we take the intersection over all non-zero central projections
$e$ in $1_vN1_v$.

Let $\mathcal P(G)$ be the set of finite paths in $G$. For every $v \in V$ we set 
$$
\Delta_v = \left\{|\mu|-|\nu| : \ \mu, \nu \in \mathcal P(G), \
 s(\mu) = s(\nu)= v, \ r(\mu) = r(\nu) \right\}.
$$
Then $\bigcap_{v  \in V} \Delta_v$
is a subgroup of $\mathbb Z$ and there is a unique natural number
$d_G$ such that
$\bigcap_{v \in V} \Delta_p = d_G \mathbb Z$.

\begin{prop}\label{j50} Let $\psi$ be an extremal $\beta$-KMS weight
  for the gauge action on $C^*(G)$. Then $\pi_{\psi}(C^*(G))''$ is a
  hyper-finite factor such that
$$
\Gamma\left(\pi_{\psi}\left(C^*(G)\right)''\right) \subseteq \mathbb Zd_G
\beta .
$$
\end{prop}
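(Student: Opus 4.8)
The plan is to combine the factoriality supplied by Lemma \ref{k20} with the spectral formula (\ref{j12}) and a strict-positivity property of the eigenvector attached to $\psi$, exploiting that (\ref{j12}) is valid for \emph{every} vertex $v$.

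First I would dispose of hyper-finiteness. The RDA-groupoid $\Gamma_{\sigma}$ is amenable, so $C^*(G) = C^*_r(\Gamma_{\sigma})$ is nuclear and its enveloping von Neumann algebra $C^*(G)^{**}$ is injective. Since $M = \pi_{\psi}(C^*(G))''$ is the image of $C^*(G)^{**}$ under the normal extension of $\pi_{\psi}$, that is, a cut-down by a central projection, it is again injective, and it has separable predual because $C^*(G)$ is separable. By Connes' theorem $M$ is then approximately finite dimensional, i.e. hyper-finite.

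For the inclusion $\Gamma(M) \subseteq \mathbb Z d_G \beta$ the key preliminary is that $1_v = \pi_{\psi}(1_v) \neq 0$ for \emph{all} $v \in V_G$. Writing $\xi_v = m(C_v)$ for the eigenvector of Theorem \ref{THM2}, so that $\psi(1_v) = \xi_v$, condition b') shows that $\xi_v > 0$ forces $A_{vw} > 0$ for some $w$ with $\xi_w > 0$, and conversely that any vertex emitting an edge into the support $\{w : \xi_w > 0\}$ already lies in that support. Starting at a vertex of the non-empty support one may thus build an infinite path that stays inside the support, and cofinality lets every vertex $u$ reach this path by a finite path; propagating positivity backwards along that connecting path via the second implication shows $\xi_u > 0$. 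Hence the support is all of $V_G$, and $\psi(1_v) = \xi_v > 0$ gives $1_v \neq 0$ for every $v$.

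Next I would identify the Arveson spectrum of the full corner. Because the spectral projection $E_n(x) = \int_{\mathbb T} z^{-n}\tilde{\theta}_z(x)\,dz$ is $\sigma$-weakly continuous, the degree-$n$ subspace $1_v M_n 1_v$ is the $\sigma$-weak closure of $\pi_{\psi}(1_v C^*(G)_n 1_v)$, where $C^*(G)_n$ is the closed span of the $s_{\mu}s_{\nu}^*$ with $|\mu| - |\nu| = n$. Now $1_v s_{\mu}s_{\nu}^* 1_v$ vanishes unless $s(\mu) = s(\nu) = v$, while $s_{\mu}s_{\nu}^* \neq 0$ forces $r(\mu) = r(\nu)$; hence $1_v C^*(G)_n 1_v = 0$ whenever $n \notin \Delta_v$, giving $\Sp(1_v M 1_v) \subseteq \Delta_v$. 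Taking $e = 1_v$ as a non-zero central projection in $1_v N 1_v$ in the intersection (\ref{j12}) yields $\Gamma(M) \subseteq \beta\,\Sp(1_v M 1_v) \subseteq \beta \Delta_v$, and since this holds for every $v$, intersecting over $v$ gives $\Gamma(M) \subseteq \beta \bigcap_{v \in V_G}\Delta_v = \mathbb Z d_G \beta$. I expect the strict-positivity step to be the main obstacle: without $1_v \neq 0$ for every single $v$ one only controls $\bigcap_{v}\Delta_v$ over the support of $\xi$, which need not equal $d_G \mathbb Z$ because the individual $\Delta_v$ are not subgroups of $\mathbb Z$. Cofinality is precisely what forces the support to be all of $V_G$ and repairs this gap; the remaining ingredients—$\sigma$-weak continuity of $E_n$ and the vanishing of the off-support corners—are routine.
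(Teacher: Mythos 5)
Your proposal follows the same overall strategy as the paper's proof: hyper-finiteness from nuclearity of $C^*(G)$, and the inclusion $\Gamma(M) \subseteq \mathbb Z d_G \beta$ obtained by showing $\Sp(1_vM1_v) \subseteq \Delta_v$ for every vertex $v$, taking $e = 1_v$ in (\ref{j12}), and intersecting over $v$. Your treatment of the spectral step is the contrapositive of the paper's: the paper takes $k \in \Sp(1_vM1_v)$, uses the $\sigma$-weak density of $1_vC_c(\Gamma_{\sigma})1_v$ in $1_vM1_v$ and normality of the spectral projection to produce a non-zero homogeneous $f \in 1_vC_c(\Gamma_{\sigma})1_v$ of degree $k$, and reads off from a point $(p,-k,p')$ in its support a pair of finite paths witnessing $k \in \Delta_v$; you instead note that the degree-$n$ part of $1_vC^*(G)1_v$, spanned by the elements $1_vs_{\mu}s_{\nu}^*1_v$ with $|\mu|-|\nu| = n$, vanishes outright when $n \notin \Delta_v$. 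These are the same argument in two equivalent pictures (groupoid functions versus Cuntz--Krieger generators).

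The genuinely new element in your proposal is the preliminary step that $1_v = \pi_{\psi}(1_v) \neq 0$ for \emph{every} vertex, which the paper nowhere addresses, and you are right that it is not a cosmetic point: both the validity of (\ref{j12}) at a given vertex and its use at an arbitrary vertex presuppose $1_v \neq 0$, i.e. $\xi_v = \psi(1_v) > 0$, and if the support of $\xi$ were proper one would only obtain $\Gamma(M) \subseteq \beta \bigcap_{v \in \supp \xi} \Delta_v$, which (the individual $\Delta_v$ not being subgroups) can be strictly larger than $\beta d_G \mathbb Z$. Be aware, though, that your repair invokes cofinality, which is a hypothesis of Proposition \ref{k23} but \emph{not} of Proposition \ref{j50} as stated; and in fact without cofinality the stated inclusion can fail. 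For example, let $G$ be the disjoint union of a ``doubled'' $2$-cycle and a ``doubled'' $3$-cycle (two parallel edges along each edge of the cycle). Then $\bigcap_{v}\Delta_v = 6\mathbb Z$, so $d_G = 6$, while the extremal $(\log 2)$-KMS weight supported on the first component generates the same factor as the unique KMS weight of that component alone, whose invariant contains $2\log 2 \,\mathbb Z$ by the lower bound of Proposition \ref{k23} applied to that (cofinal) component; since $2 \log 2 \notin 6 \log 2\, \mathbb Z$, the conclusion of Proposition \ref{j50} fails for $G$. So your full-support lemma is exactly the missing hypothesis-plus-argument that makes the proposition correct, and the paper's own proof silently assumes it; the only criticism is that you should state explicitly that you are proving the proposition under the cofinality assumption, which is where it is actually used in the paper.
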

\begin{proof} That $\pi_{\psi}(C^*(G))''$ is hyper-finite follows from the
  nuclearity of $C^*_r(G)$, cf. \cite{KPRR}. Consider a vertex $v
\in V_G$ and let $k
\in  \Sp(1_vM1_v)$. There is an element $a
\in 1_vM1_v$ such that 
$$
\int_{\mathbb T} \lambda^{-k} \tilde{\theta}_{\lambda}(a) \ d\lambda =
a \neq 0.
$$
The density of $1_vC_c(\Gamma_{\sigma})1_v$ in $1_vM1_v$ for the $\sigma$-weak
topology implies that there is an element $b \in 1_vC_c(\Gamma_{\sigma})1_v$
such that 
$$
\int_{\mathbb T} \lambda^{-k} \tilde{\theta}_{\lambda}(b) \ d\lambda
\neq 0.
$$
Then $f = \int_{\mathbb T} \lambda^{-k} \tilde{\theta}_{\lambda}(b) \
d\lambda \in 1_vC_c(\Gamma_{\sigma})1_v$ is an element such that
$\theta_{\lambda}(f) = \lambda^k f$ for all $\lambda \in \mathbb T$. It follows that $f$ is supported
in 
$$
\Gamma_{\sigma}(-k) \cap s^{-1}(C_v) \cap r^{-1}(C_v),
$$
which must therefore be non-empty. Let $(p,-k,p')$ be an element in
this set. Then $\sigma^{n-k}(p) = \sigma^n(p')$ for all large $n$. In
particular, for $n$ large enough $\mu = p'_1p'_2 \cdots p'_{n}$ and
$\nu = p_1p_2\cdots p_{n-k}$ are paths in $G$ such that $s(\mu) = s(\nu) =
v, \ r(\nu) = r(\mu)$, and $|\mu| - |\nu| = k$. Since $v$ was
arbitrary, $k \in d_G \mathbb Z$. This gives the
stated inclusion, thanks to (\ref{j12}).
\end{proof}

Let $\mathbb P$ be set of integers $d \in \mathbb Z$ with the
property that there is a non-empty hereditary set $H \subseteq V_G$ and natural numbers $M,L \in \mathbb N$ such that for
every path $\mu$ in $H$ of length $M$ there are paths $l_+$ and $l_-$
with lengths $|l_+| \leq L, \ |l_-| \leq L$, and $s(l_+) = s(l_-) =
s(\mu), \ r(l_+)=r(l_-) = r(\mu)$, such that
$$
d = |l_+| - |l_-|.
$$

If $G$ is cofinal the intersection of two non-empty
  hereditary subsets of $V_G$ is again non-empty and
  hereditary. Therefore the set $\mathbb P$ is a subgroup of $\mathbb Z$
  in this case, and we can define $d'_G \in \mathbb N$ as the unique
  natural number such that $\mathbb P = \mathbb Zd'_G$.

\begin{prop}\label{k23} Let $\psi$ be an extremal $\beta$-KMS weight
  for the gauge action on $C^*(G)$. Assume that $G$ is cofinal and
  that $G$ has uniformly bounded out-degree, i.e. $\sup_{v \in V_G} \#
  s^{-1}(v) < \infty$. Then
  $\pi_{\psi}(C^*(G))''$ is a hyper-finite factor such that
$$
\mathbb Z d'_G\beta \subseteq
\Gamma\left(\pi_{\psi}\left(C^*(G)\right)''\right) \subseteq \mathbb Zd_G\beta.
$$
\end{prop}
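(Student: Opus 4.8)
The plan is to keep the content already proved in Proposition \ref{j50} — namely that $M:=\pi_\psi(C^*(G))''$ is a hyper-finite factor with $\Gamma(M)\subseteq\mathbb Z d_G\beta$ — and to add only the reverse inclusion $\mathbb Z d'_G\beta\subseteq\Gamma(M)$. If $\beta=0$ all three groups are $\{0\}$ and there is nothing to do, so assume $\beta\neq0$. By the formula (\ref{j12}) it suffices to prove that every $d\in\mathbb P=\mathbb Z d'_G$ lies in $\Sp(e1_vM1_ve)$ for a fixed vertex $v$ and \emph{every} non-zero central projection $e$ of $1_vN1_v$. So I would fix such a $d$ together with the non-empty hereditary set $H$ and the constants $M_0,L$ furnished by the definition of $\mathbb P$, and fix $e$; since $\mathbb P$ is a subgroup it is enough to treat $d$ up to sign.

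First I would describe $e$ measure-theoretically. Because $N=M^\theta=\pi_\psi(C^*_r(\ker c_0))''$ is the von Neumann algebra of the tail equivalence relation $\ker c_0=\Gamma_\sigma(0)$ with respect to $m$, its centre consists of tail-invariant Borel sets, so $e=\pi_\psi(1_E)$ for a tail-invariant $E\subseteq C_v$ with $m(E)>0$. Tail-invariance lets me attach to each vertex $u$ a reduced fibre $\tilde E_u=\{x\in C_u:\ \kappa x\in E\}$, independent of the finite path $\kappa$ with $r(\kappa)=u$, and the conformality relation (\ref{c36}) gives $m(\tilde E_u)=\sum_{s(f)=u}e^{-\beta}m(\tilde E_{r(f)})$. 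Hence positivity of the fibre measure propagates forward along at least one edge, and since $m(E)>0$ some cylinder $Z(\kappa_0)\subseteq C_v$ has $m(\tilde E_{r(\kappa_0)})>0$. Iterating the propagation produces an infinite path $u_0\to u_1\to\cdots$ (with $u_0=r(\kappa_0)$, reachable from $v$) along which every fibre has positive measure.

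The crucial step is to meet the hereditary set $H$. Taking any $h\in H$ and using cofinality of $G$, I would route $h$ into this infinite path: there is a finite path from $h$ to some vertex $u_N$ of it, and since $H$ is hereditary $u_N$ and all later $u_{N+i}$ lie in $H$. Thus $\mu:=u_N\to u_{N+1}\to\cdots\to u_{N+M_0}$ is a length-$M_0$ path inside $H$ with $m(\tilde E_{r(\mu)})>0$, while $s(\mu)=u_N$ is reachable from $v$ by a finite path $\eta$ (namely $\kappa_0$ followed by the segment $u_0\to\cdots\to u_N$). To this $\mu$ the definition of $\mathbb P$ associates $l_\pm$ with $s(l_\pm)=s(\mu)$, $r(l_\pm)=r(\mu)$ and $|l_+|-|l_-|=d$. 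I would then let $w\in C_c(\Gamma_\sigma)$ be the characteristic function of the compact open bisection $\{(\eta l_-x,\,-d,\,\eta l_+x):x\in C_{r(\mu)}\}$ (the cylinder $C_{r(\mu)}$ is compact by row-finiteness), so that $w\in 1_vM1_v$ and $\tilde\theta_\lambda(w)=\lambda^{d}w$ since $w$ is supported in $\Gamma_\sigma(-d)$. The final point, that $ewe\neq0$, uses tail-invariance once more: $w$ interchanges $\eta l_+x$ and $\eta l_-x$, which are tail-equivalent (both have tail $x$), so $\eta l_+x\in E\iff\eta l_-x\in E$; consequently $ewe$ is $w$ cut down to $\{x\in\tilde E_{r(\mu)}\}$, a non-zero element of $e1_vM1_ve$ with $\tilde\theta_\lambda(ewe)=\lambda^{d}ewe$. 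Hence $d\in\Sp(e1_vM1_ve)$, and as $e$ and $d\in\mathbb P$ were arbitrary, (\ref{j12}) yields $\mathbb Z d'_G\beta\subseteq\Gamma(M)$.

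I expect the main obstacle to be exactly the middle step: the hereditary set $H$ supplied by $\mathbb P$ is a priori unrelated to the chosen central projection $e$, and one must manufacture \emph{inside} $H$ a path whose range carries positive reduced-fibre measure and whose source is reachable from $v$. Cofinality is what bridges $H$ and the measure-theoretic support of $e$, while tail-invariance both propagates the fibre positivity forward and guarantees that the degree-$d$ partial isometry is not annihilated by the compression by $e$. The uniformly bounded out-degree hypothesis I expect to enter chiefly in securing that $\mathbb P=\mathbb Z d'_G$ with the uniform constants $M_0,L$ available (and in matching with the eigenvector analysis of \cite{Th3}); for the compactness of the bisection used above, row-finiteness already suffices.
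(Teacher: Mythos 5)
Your reduction via (\ref{j12}) and your identification of central projections of $1_vN1_v$ with invariant Borel sets are reasonable starting points, but the final, decisive step of your argument is false. Since $N=M^{\theta}$ is generated by $\pi_{\psi}\left(C^*_r(\ker c_0)\right)$ with $\ker c_0=\Gamma_{\sigma}(0)$, the equivalence relation whose invariant sets exhaust the center of $1_vN1_v$ is $R_v$, the \emph{lag-zero} tail relation: $p\sim p'$ iff $p_i=p'_i$ for all large $i$. When $d=|l_+|-|l_-|\neq 0$ the paths $\eta l_+x$ and $\eta l_-x$ are \emph{not} $R_v$-equivalent: they carry the tail $x$ only after shifts of different lengths, so their coordinates never eventually agree. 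Hence $R_v$-invariance of $E$ gives no relation between $\eta l_+x\in E$ and $\eta l_-x\in E$, and $ewe\neq 0$ does not follow. This is not a repairable detail but the crux of the proposition: if central projections of $1_vN1_v$ were invariant under the \emph{lagged} tail relation, your argument would apply verbatim to any $d\in\Delta_v$ (any length difference of two paths with common source and range), proving $\Gamma(M)=\mathbb Z d_G\beta$ in all cases --- contradicting Example \ref{example1}, where $d_G=1$, $d'_G=0$ and the paper explicitly states that the invariant is unknown. (A minor slip of the same origin: your fibres $\tilde E_u$ are well defined only for connecting paths $\kappa$ of a fixed length.)

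The paper's proof is quantitative precisely in order to get around this obstruction. It approximates $q$ in the trace two-norm $\|\cdot\|_v$ by a central projection $p=\sum_{w\in F}p_w$ of a finite-dimensional subalgebra $A_n$ of $1_vC^*(G)^{\gamma}1_v$, with $n$ large enough that $F\subseteq H$ (Lemma 3.7 of \cite{Th3}); it then exploits the \emph{uniformity} built into the definition of $\mathbb P$ --- companion paths $l^w_{\pm}$ of length at most $L$ exist for \emph{every} length-$M$ path in $H$ --- to construct partial isometries $u_{\pm}=\sum_{w\in F}u_{\pm}(w)$ whose source and range projections lie under $p$ and with $u_+^*u_-$ homogeneous of degree $d'_G$. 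The KMS condition and the uniform out-degree bound $B$ produce constants $K_1,K_2>0$ such that $\omega_v\left(qu_+^*u_-qu_-^*u_+q\right)\geq K_1K_2\omega_v(q)$ minus error terms of order $\epsilon^{\frac{1}{4}}$ coming from $\|p-q\|_v$, whence $qu_+^*u_-q\neq 0$ for $\epsilon$ small. Note that the bounded out-degree hypothesis enters at (\ref{k25}), guaranteeing that some length-$M$ continuation $\mu(w)$ of each $w\in F$ carries a definite fraction of $\omega_v(p_w)$; this estimate is exactly the quantitative substitute for the invariance you tried to invoke. It is not needed, as you guessed, to make $\mathbb P$ a group --- that requires only cofinality.
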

\begin{proof} Consider a vertex $v$ and a non-zero central projection
  $q$ in $1_vN1_v$. In view of Proposition \ref{j50} and (\ref{j12}) it
  suffices to show that $d'_G \in  \Sp(qMq)$. Since
  $d'_G \in \mathbb P$ there is a non-empty hereditary set $H \subseteq V_G$ and natural numbers $M,L \in \mathbb N$ such that for
every path $\mu$ in $H$ of length $M$ there are paths $l_{\pm}$ with lengths $|l_{\pm}| \leq L$, $s(l_{\pm}) =
s(\mu), \ r(l_{\pm})= r(\mu)$ such that
$d'_G = |l_+| - |l_-|$. It follows from Lemma 3.7 in \cite{Th3} that
because $G$ is cofinal and $H$ hereditary there is an $N \in \mathbb
N$ such that every path of length $\geq N$ emitted from $v$ terminates
in $H$.

Observe that the fixed point algebra $C^*(G)^{\gamma}$ of the gauge action is the
reduced groupoid $C^*$-algebra of the closed and open
sub-groupoid $\Gamma_{\sigma}(0)$ of $\Gamma_{\sigma}$. The corner $1_vC^*(G)^{\gamma}1_v$ is the reduced
groupoid $C^*$-algebra of
$$
R_v = \bigcup_{n \in \mathbb N} \left\{ (p,p') \in P(G) \times P(G) :
  \ s(p) = s(p') = v, \ p_i = p'_i, \ i \geq n \right\} .
$$
For $n \in \mathbb N$, let $P_v(n)$ be the set
of paths $\mu$ in $G$ of length $n$ such that $s(\mu) = v$. For every pair $(\mu,\mu') \in P_v(n) \times P_v(n)$, let $e^n_{\mu, \mu'} \in C_c(R_v)$ be the
characteristic function of the set
$$
\left\{ (p,p') \in Z(\mu) \times Z(\mu') : \ p_i = p'_i , \ i \geq n +1
\right\}.
$$ 
Then $\left\{ e^n_{\mu,\mu'}: \ \mu,\mu' \in P_v(n) \right\}$
generates a finite dimensional $*$-subalgebra $A_{n}$
of
$C_c(R_v)$ such that 
$$
1_v C^*(G)^{\gamma}1_v = \overline{\bigcup_{n \in \mathbb
  N} A_n}.
$$ 
It follows that $\bigcup_{n \in \mathbb
  N} A_n$ is dense in $1_vN1_v$
for the strong operator topology. (Here and in the following we
suppress the representation $\pi_{\psi}$, and regard $C^*(G)$ as
acting on $H_{\psi}$.) Since $0 < \psi(1_v) < \infty$ there is a normal state $\omega_v$ on $1_vM1_v$
such that 
$$
\omega_v(a) = \psi(1_v)^{-1}\tilde{\psi}(a).
$$
Note that $\omega_v$ is a trace on $1_vN1_v$. Let $\| \cdot \|_{v}$ be the corresponding
$2-$norm,
$$
\left\| a \right\|_v = \sqrt{\omega_v(a^*a)} ,
$$
$ a \in 1_vM1_v$. For any $\epsilon > 0$ there is an $n \in \mathbb N$
and an element $a\in A_n$ such that $\|a\| \leq 1$ and $\left\|q-a\right\|_v\leq
\epsilon$. We shall require, as we can, that $n \geq N$, and to specify
how small an $\epsilon$ we want, let $B$ be a uniform upper bound for
the out-degree in $G$, i.e. $\# s^{-1}(v) \leq B$ for all $v \in V_G$,
and set
$$
K_1 = \frac{\min\{e^{(M-L)\beta}, e^{M\beta} \}}{MB}, \  K_2 =
\min\{e^{-d'_G\beta}, 1 \} .
$$  
We choose $\epsilon > 0$ so small that $\epsilon < 10^{-2}$ and
\begin{equation*}
4\epsilon^{\frac{1}{4}}
+2\epsilon^{\frac{1}{4}}K_2 +2\epsilon^{\frac{1}{4}} K_1K_2 \\
 <  K_1K_2\omega_v(q).
\end{equation*}
 Let $U(A_{n})$ be the
  unitary group of $A_{n}$ and set
$$
b = \int_{U(A_{n})} uau^* \ du,
$$
where we integrate with respect to the Haar measure $du$ on $U(A_{n})$. Then
$b$ is in the center of $A_{n}$ and we have the estimate
$$
\left\|b - q\right\|_v \leq \int_{U(A_n)} \left\|uau^* - uqu^*\right\|_v \
du \leq \epsilon .
$$
Standard arguments, e.g. as in the proof of Lemma
 12.2.3 in \cite{KR}, shows that there is a central projection $p$ in
 $A_{n}$ such that 
\begin{equation}\label{j88}
\left\|p -q\right\|_v \leq
 2\epsilon^{\frac{1}{4}}.
\end{equation}
By definition of $A_{n}$ there is a finite set $F$ of
 vertexes in $G$ such that 
$$
p = \sum_{w \in F} p_w ,
 $$ 
where $p_w$ is the characteristic function of $
\left\{p \in P(G) : \
  s(p) = v, \ r(p_n) = w\right\}$. Note that $F \subseteq H$ since $n
\geq N$. For each $w \in F$ we can therefore choose a path $\mu(w)$ in $H$ such
that $s(\mu(w)) = w$, $\left|\mu(w)\right| = M$ and
\begin{equation}\label{k25}
\omega_v(p_{\mu(w)}) \geq \frac{\omega_v(p_w)}{MB},
\end{equation}
where $p_{\mu(w)}$ is the characteristic function of the set 
$$\left\{p \in P(G) : \
  s(p) = v, \ r(p_n) = w, \ p_{n+1}p_{n+2} \cdots p_{n+M} = \mu(w) \right\}.
$$
Note that $p_{\mu(w)} \leq
p_w$. For each $w$ there are paths $l^w_{\pm}$ such that $s(l^w_{\pm}) = w,
\ r(l^w_{\pm})  = r\left(\mu(w)\right)$, $\left|l^w_{\pm}\right| \leq L$ and $d'_G =
\left|l^w_+\right| - \left|l^w_-\right|$. Let $u_{\pm}(w) \in
C_c(\Gamma_{\sigma})$ be the characteristic function of the set
consisting of the elements $(p,M-\left|l^w_{\pm}\right|,p') \in
\Gamma_{\sigma}$ with the properties: 
\begin{enumerate}
\item[$\bullet$] $s(p) =
s(p')  =v$,
\item[$\bullet$] $r(p_n) = r(p'_n) = w$,
\item[$\bullet$] $p_i= p'_i, \ i = 1,2, \cdots, n$,
\item[$\bullet$] $p_{n+1}p_{n+2} \cdots p_{n+M} = \mu(w)$,
\item[$\bullet$] $ p'_{n+1}p'_{n+2} \cdots p'_{n+|l^w_{\pm}|}= l^w_{\pm}$,
\item[$\bullet$] $p_{n+M+i} = p'_{n+\left|l^w_{\pm}\right|+i}, \ i
  \geq 1$.
\end{enumerate}
Then $u_{\pm}(w)$ are partial isometries such that
\begin{enumerate}
\item[a)] $u_{\pm}(w)u_{\pm}(w)^* = p_{\mu(w)}$,
  $u_{\pm}(w)^*u_{\pm}(w) \leq p_w$,
\item[b)] $\gamma_{t}\left(u_{\pm}(w)\right) =
  e^{i(M -\left|l^w_{\pm}\right|)t} u_{\pm}(w)), \ t \in \mathbb R$, 
\end{enumerate}
By combining b) with the fact that
${\psi}$ is a $\beta$-KMS weight for $\gamma$, we find that
$$
\omega_v(u_+(w)^*u_-(w)qu_-(w)^*u_+(w)) = e^{-d'_G\beta}\omega_v(
qu_-(w)^*u_+(w)u_+(w)^*u_-(w)).
$$ 
It follows from a) that
$u_+(w)u_+(w)^*u_-(w) = u_-(w)$ and we get then the identity 
\begin{equation}\label{k29}
\omega_v(u_+(w)^*u_-(w)qu_-(w)^*u_+(w)) = e^{-d'_G\beta}\omega_v( q
u_-(w)^*u_-(w)).
\end{equation}
Set $u_{\pm} = \sum_{w\in F} u_{\pm}(w)$. It follows from a) that
$u_{\pm}$ are partial isometries such that
\begin{equation}\label{l4}
u_-^*u_- = \sum_{w \in F} u_-(w)^*u_-(w) \leq p.
\end{equation} 
By using that
${\psi}$ is a KMS weight it follows that
$\omega_v\left(u_+(w)^*u_-(w)qu_-(w')^*u_+(w')\right) = 0$ when $w\neq w'$, and by combining with (\ref{k29}) we find that
\begin{equation}\label{j89}
\omega_v(u_+^*u_-qu_-^*u_+) 
= e^{-d'_G\beta}\omega_v(qu_-^*u_-).
\end{equation}
By a similar reasoning we find by use of a) and (\ref{k25}) that
$$
\omega_v(u_-(w)^*u_-(w)) =
e^{(M-\left|l^w_-\right|)\beta}\omega_v(u_-(w)u_-(w)^*) \geq
K_1\omega_v(p_w).
$$ 
Summing over $w \in F$ gives that
\begin{equation}\label{k32}
\omega_v(u_-^*u_-) \geq K_1\omega_v(p). 
\end{equation}
We can now conclude that
\begin{equation*}\label{k28}
\begin{split}
&\omega_v(qu_+^*u_-qu_-^*u_+q) \geq
\omega_v\left(u_+^*u_-qu_-^*u_+\right) -
4\epsilon^{\frac{1}{4}} \ \ \ \ \ \text{(using (\ref{j88}))}\\
&\\
& = - 4\epsilon^{\frac{1}{4}} + 
e^{-d'_G \beta}\omega_v(qu_-^*u_-)
\ \ \ \ \ \text{(using (\ref{j89}))}
\\
& \\
& \geq  - 4\epsilon^{\frac{1}{4}}
-2\epsilon^{\frac{1}{4}}K_2 + K_2\omega_v(u_-^*u_-) \ \ \ \ \
\text{(using (\ref{j88}) and (\ref{l4}))} \\
& \\
& \geq  - 4\epsilon^{\frac{1}{4}}
-2\epsilon^{\frac{1}{4}}K_2  + K_1K_2 \omega_v(p) \ \ \ \ \ \text{(using (\ref{k32}))} \\
& \\
& \geq  - 4\epsilon^{\frac{1}{4}}
-2\epsilon^{\frac{1}{4}}K_2 - 2\epsilon^{\frac{1}{4}}K_1K_2  + K_1K_2\omega_v(q) \ \ \ \ \ \text{(using (\ref{j88})).}
\end{split}
\end{equation*}
Thanks to the choice of $\epsilon$ this implies that $qu_+^*u_-q \neq
0$. Since
$\tilde{\theta}_{\lambda}(qu_+^*u_-q) = \lambda^{-d'_G}qu_+^*u_-q$, we conclude
that $d'_G \in \Sp(qMq)$, as desired.
\end{proof}

When $G$ is a finite irreducible graph, and more generally when $G$ is
cofinal and $NW_G$
is finite, the two numbers $d'_G$ and $d_G$ are both equal to the
global period of $NW_G$. For a finite irreducible graph Proposition
\ref{k23} recovers that part of the results from \cite{O} which deals with the gauge
action. As will be shown in Example \ref{example2} and Example
\ref{example3} below, there are other cases where the two numbers
$d'_G$ and $d_G$ agree and where Proposition \ref{k23} therefore
determines the $\Gamma$-invariant of $\pi_{\psi}(C^*(G))''$. However,
it is also easy to construct examples of infinite irreducible graphs, as the one
presented in Example \ref{example1} below, where they differ. I have
no idea what the $\Gamma$-invariant is in such cases.

\section{Examples} 

The first two examples are intended to show that in some cases it is
quite easy to find all the solutions to the equations and hence
identify all KMS weights for the gauge action. The third example is
meant to show how methods and results from the theory of countable
state Markov chains in some cases can be used for the same purpose. A
common feature is the scarcity of the KMS states compared to KMS weights.

\begin{example}\label{example1}

Consider the following graph with labeled vertexes:

\bigskip

\begin{tiny}

\begin{tikzpicture}[->,thick,x=15mm,y=15mm]
  \begin{scope}
    \node (O) at (0,0) {$1$};
    \node (a1) at (0,1) {$a_1\mathstrut$};
    \node (a2) at (0,2) {$a_2\mathstrut$};
    \node (a3) at (0,3) {$a_3\mathstrut$};
    \node (a4) at (0,4) {$\phantom{a_4}$};
    \node (am1) at (1,1) {$a_{-1}\mathstrut$};
    \node (am2) at (1,2) {$a_{-2}\mathstrut$};
    \node (am3) at (1,3) {$a_{-3}\mathstrut$};
    \node (am4) at (1,4) {$\phantom{a_{-4}}$};

    \draw  (O)--(a1);
    \draw (a1)--(a2);
    \draw (a2)--(a3);
    \draw (a3)--(a4);
    \draw (a1)--(am1);
    \draw (a2)--(am2);
    \draw (a3)--(am3);
    \draw (a4)--(am4);
    \draw (am4)--(am3);
    \draw (am3)--(am2);
    \draw (am2)--(am1);
    \draw (am1)--(O);
    \fill 
    (a4) circle (1pt)
    (a4)++(0,5pt) circle (1pt)
    (a4)++(0,10pt) circle (1pt)
    (am4) circle (1pt)
    (am4)++(0,5pt) circle (1pt)
    (am4)++(0,10pt) circle (1pt)
    ;

  \end{scope}
  \begin{scope}[rotate=-120]
    \node (O) at (0,0) {$\phantom{\mathstrut1}$};
    \node (c1) at (0,1) {$c_1\mathstrut$};
    \node (c2) at (0,2) {$c_2\mathstrut$};
    \node (c3) at (0,3) {$c_3\mathstrut$};
    \node (c4) at (0,4) {$\phantom{a}$};
    \node (cm1) at (1,1) {$c_{-1}\mathstrut$};
    \node (cm2) at (1,2) {$c_{-2}\mathstrut$};
    \node (cm3) at (1,3) {$c_{-3}\mathstrut$};
    \node (cm4) at (1,4) {$\phantom{a}$};
    
    \draw  (O)--(c1);
    \draw (c1)--(c2);
    \draw (c2)--(c3);
    \draw (c3)--(c4);
    \draw (c1)--(cm1);
    \draw (c2)--(cm2);
    \draw (c3)--(cm3);
    \draw (c4)--(cm4);
    \draw (cm4)--(cm3);
    \draw (cm3)--(cm2);
    \draw (cm2)--(cm1);
    \draw (cm1)--(O);
    \fill 
    (c4) circle (1pt)
    (c4)++(0,5pt) circle (1pt)
    (c4)++(0,10pt) circle (1pt)
    (cm4) circle (1pt)
    (cm4)++(0,5pt) circle (1pt)
    (cm4)++(0,10pt) circle (1pt)
    ;
  \end{scope}

  \begin{scope}[rotate=135]
    \node (O) at (0,0) {$\phantom{1\mathstrut}$};
    \node (b1) at (0,1) {$b_1\mathstrut$};
    \node (b2) at (0,2) {$b_2\mathstrut$};
    \node (b3) at (0,3) {$b_3\mathstrut$};
    \node (b4) at (0,4) {$\phantom{a}$};
    \node (bm1) at (1,1) {$b_{-1}\mathstrut$};
    \node (bm2) at (1,2) {$b_{-2}\mathstrut$};
    \node (bm3) at (1,3) {$b_{-3}\mathstrut$};
    \node (bm4) at (1,4) {$\phantom{a}$};
    
    \draw  (O)--(b1);
    \draw (b1)--(b2);
    \draw (b2)--(b3);
    \draw (b3)--(b4);
    \draw (b1)--(bm1);
    \draw (b2)--(bm2);
    \draw (b3)--(bm3);
    \draw (b4)--(bm4);
    \draw (bm4)--(bm3);
    \draw (bm3)--(bm2);
    \draw (bm2)--(bm1);
    \draw (bm1)--(O);
    \fill 
    (b4) circle (1pt)
    (b4)++(0,5pt) circle (1pt)
    (b4)++(0,10pt) circle (1pt)
    (bm4) circle (1pt)
    (bm4)++(0,5pt) circle (1pt)
    (bm4)++(0,10pt) circle (1pt)
    ;
  \end{scope}

\end{tikzpicture}

\end{tiny}

\bigskip

For this graph it is quite easy to see that a map $\xi : V_G \to
[0,\infty)$ which is normalized such
that $\xi_1 =1$, is a solution to
the equation b') of Theorem \ref{THM2} exactly when
\begin{enumerate}
\item[i)] $\xi_{a_1} + \xi_{b_1} + \xi_{c_1} = e^{\beta}$,
\item[ii)] $\xi_{a_{-n}} = \xi_{b_{-n}} = \xi_{c_{-n}} = e^{-\beta n},
 \  n = 1,2,3, \dots$, and
\item[iii)] $\xi_{a_{n+1}} + e^{-n\beta} = e^{\beta}\xi_{a_n}, \
  \xi_{b_{n+1}} + e^{-n\beta} = e^{\beta}\xi_{b_n}, \ \xi_{c_{n+1}} +
  e^{-n\beta} = e^{\beta}\xi_{c_n}, \ n \geq 1$ 
\end{enumerate}
It follows that
$$
\xi_{a_{n+1}} = e^{n\beta} \left(\xi_{a_1} - \sum_{j=1}^n
  \left(e^{-2\beta}\right)^j\right), \ n \geq 1 ,
$$
combined with similar formulas involving the $b_n$'s and $c_n$'s.
The positivity requirement on $\xi$ implies that $\beta > 0$ and that 
$$
\min \{\xi_{a_1}, \xi_{b_1},\xi_{c_1}\} \geq \sum_{j=1}^{\infty} (e^{-2\beta})^j =
\frac{e^{-2 \beta}}{1-e^{-2\beta}} .
$$   
Combined with condition i) it follows that 
$3 \frac{e^{-2\beta}}{1-e^{-2\beta}} \leq e^{\beta}$,
which means that $\beta \geq {\log \alpha} \sim 0,5138$, where $\alpha$ is the
real root of the polynomial $x^3 - x -3$. For $\beta = {\log
  \alpha}$ there is a unique solution, and hence there is a unique
$\beta$-KMS weight for the gauge action in this case, up to scalar multiplication. For all values of
$\beta > {\log \alpha}$ the set of $\beta$-KMS weights form a
cone with a
triangle as base. The extreme rays of the cone correspond to the three cases where
$$
\left\{\xi_{a_1},\xi_{b_1},\xi_{c_1} \right\} = \left\{\frac{e^{-2\beta}}{1-e^{-2\beta}}
  , \ e^{\beta}  - \frac{2e^{-2\beta}}{1-e^{-2\beta}}  \right\}.
$$   
Note that only for the unique solution with $\beta = {\log
  \alpha}$ is the sum $\sum_{v \in V} \xi_v$ finite. It follows
that only for this
value of $\beta$ is there a KMS state, and it is then unique.

Concerning the $\Gamma$-invariant of $\pi_{\psi}(C^*(G))''$ when
$\psi$ is an extremal $\beta$-KMS weight, we observe that in the
present example, $d_G = 1$ while $d'_G = 0$. Hence the results from
Section \ref{k40} tell us only that $\Gamma \left( \pi_{\psi}(C^*(G))''\right) \subseteq \mathbb Z \beta$.

\end{example}

\begin{example}\label{example2}

Consider the following graph, again with a convenient labeling of the vertexes:

\begin{center}
\begin{tikzpicture}[->,thick]
  \tikzstyle{matrix of math nodes}=[%
    matrix of nodes,
    nodes={%
     execute at begin node=$,%
     execute at end node=\mathstrut$
    }%
  ]

  \matrix (M) [
  matrix of math nodes,
  column sep=3em,
  row sep=2em,
  ]{
    \vdots & \vdots \\
    y_3    & x_4    \\
    y_2    & x_3    \\
    y_1    & x_2    \\
    y_0    & x_1    \\
    1      & x_0    \\
  };

\draw (M-2-1) -- (M-2-2);
\draw (M-3-1) -- (M-3-2);
\draw (M-4-1) -- (M-4-2);
\draw (M-5-1) -- (M-5-2);
\draw (M-6-1) -- (M-6-2);

\draw (M-2-1) -- (M-1-1);
\draw (M-3-1) -- (M-2-1);
\draw (M-4-1) -- (M-3-1);
\draw (M-5-1) -- (M-4-1);
\draw (M-6-1) -- (M-5-1);

\draw (M-2-2) -- (M-1-1);
\draw (M-3-2) -- (M-2-1);
\draw (M-4-2) -- (M-3-1);
\draw (M-5-2) -- (M-4-1);
\draw (M-6-2) -- (M-5-1);

\end{tikzpicture}
\end{center}

The solution $\xi$ to the eigenvalue equations a') and b') of Theorem
\ref{THM2} which is normalized to take the value 1 at the vertex in the
lower left hand corner,  is unique for all $\beta \in \mathbb R$ and is
given by
$$
\xi_{x_n} = \frac{e^{(2n+1)\beta}}{\left(1+e^{\beta}\right)^{n+1}}, \ \ \xi_{y_n}
= \left( \frac{e^{2\beta}}{1+e^{\beta}}\right)^{n+1}, \ n
=0,1,2,\cdots 
$$ 
Note that the corresponding $\beta$-KMS weight can be normalized to be
a state (i.e. equation c) of Theorem \ref{THMcor} can be made to hold) if and only
if $\beta < \log\left(\frac{1 + \sqrt{5}}{2}\right)$. Thus a
$\beta$-KMS state exists only for these values of $\beta$.

Concerning the $\Gamma$-invariant of $\pi_{\psi}(C^*(G))''$ when
$\psi$ is an extremal $\beta$-KMS weight, we observe that in the
present example, $d'_G = d_G = 1$. It follows therefore from
Proposition \ref{k23} that $\pi_{\psi}\left(C^*(G))\right)''$ is the
hyper-finite type $III_{\lambda}$-factor where $\lambda =
e^{-|\beta|}$, when $\beta \neq 0$. It is not difficult to see that
for $\beta = 0$ it is the
hyper-finite $II_{\infty}$ factor.

\end{example}

\begin{example}\label{example3} 
There are many results in the literature
which in specific cases can be used to find the solutions of the eigenvalue equations
a') and b') in Theorem \ref{THM2}. Most results about positive
eigenvalues and eigenvectors are motivated by applications to Markov
chains and therefore concerned with the case where the
matrix is stochastic in the sense that $\sum_{w \in V} A_{vw} = 1$ for all $v
\in V$, and with the case $\beta = 0$. The transition matrix of a
directed graph is rarely stochastic, but nonetheless these results are
highly relevant here since a given solution $\xi$ to a') and b') give
rise to the stochastic matrix
\begin{equation}\label{h1}
B_{vw} = e^{-\beta} \xi_v^{-1}\xi_w A_{vw},
\end{equation}
and the probabilistic methods and results for stochastic matrices can
then be used to find all other solutions. The following is an example
of this.

Let $\mu : \mathbb Z^d \to \mathbb N$
  be a finitely supported map. We define a matrix $A_{vw}, v,w \in
  \mathbb Z^d$, such that
$$
A_{vw} = \mu(w-v) .
$$ 
$A$ is the adjacency matrix of a graph $G$ with $\mathbb Z^d$ as vertex
set. Assume that the semi-group generated by the support of $\mu$ is all of
$\mathbb Z^d$. Then $A$ is irreducible (i.e. for all $v,w \in \mathbb
Z^d$ there is an $n \in \mathbb N$ such that $A^n_{vw} > 0$) and the
graph $G$ satisfies conditions in 3) of Theorem
\ref{betavalues}. We seek therefore first the value $\beta_0$; the smallest
$\beta$-value for which there is a positive $e^{\beta}$-eigenvector
for $A$, and for this the methods and results from Chapter 8
in \cite{Wo} can be adopted.

For every $c \in \mathbb R^d$, define $f_c: \mathbb Z^d \to
]0,\infty[$ such that
$$
f_c(v) = \exp(< c,v>), \ v \in \mathbb Z^d,
$$
where $< \cdot, \ \cdot>$ is the inner product in $\mathbb R^d$. It
is straightforward to check that $f_c$ is a positive
$e^{\beta}$-eigenvector for $A$, where
\begin{equation}\label{k41}
\beta = \log \left(\sum_{w \in \mathbb Z^d}
\mu(w)\exp(< c,w>)\right) .
\end{equation}
To see that the $f_c$'s generate all positive eigenvectors of $A$,
consider an arbitrary $\beta \in \mathbb R$ and let $\psi$ be a positive $e^{\beta}$-eigenvector for $A$, extremal
among those that take the value $1$ at $0$. For $u \in \mathbb Z^d$,
set $\psi_u(v) = \psi(u)^{-1}\psi(u+v)$. Then $\psi_u$ is also a
positive $e^{\beta}$-eigenvector. Note that 
$$
\psi(v) = \sum_{w \in \mathbb Z^d} e^{-\beta} \mu(w-v)\psi(w) =
\sum_{w \in \mathbb Z^d} e^{-\beta} \mu(w) \psi(w)\psi_{w}(v) .
$$
Since $\sum_{w \in \mathbb Z^d} e^{-\beta} \mu(w) \psi(w) = \psi(0) =
1$, the extremality of $\psi$ implies that $\psi_w = \psi$ for all $w$
in the support of $\mu$. That is, $\psi(w+v) = \psi(w)\psi(v)$ for all
$v \in \mathbb Z^d$ and all $w$ in the support of $\mu$. Since this
support generates $\mathbb Z^d$ by assumption, it follows that
$\psi(v+w) = \psi(v)\psi(w), v,w \in \mathbb Z^d$, which means that
there is a vector $c \in \mathbb R^d$ such that $\psi = f_c$. The
relation between $c$ and $\beta$ is given by (\ref{k41}). As shown in
\cite{Th3} the set of positive $e^{\beta}$-eigenvectors that are
normalized to take the value $1$ at $0$ constitute a compact Choquet
simplex, so it follows now that they are all contained in the closed convex
hull of a set of $f_c$'s. It follows
in particular from this that $e^{\beta_0}$ is the minimal value of
the strictly convex function 
$$
c \mapsto  \sum_{w \in \mathbb Z^d}
\mu(w)\exp(< c,w>)
$$  
on $\mathbb R^d$. Taking the gradient of this function we see that
$$
\beta_0 = \log \left( \sum_{w \in \mathbb Z^d}
\mu(w)\exp(< c_{min},w>)\right)
$$ 
where
$c_{min}$ is the unique solution to the equation
$$
\sum_{w \in \mathbb Z^d}
\mu(w)\exp(< c,w>)w = 0.
$$
By passing from $A$ to the matrix $B$ as in (\ref{h1}) we can apply a
result of Ney and Spitzer, stated as Theorem 8.15 in
\cite{Wo}, to conclude that the extremal rays in the cone of positive
$e^{\beta}$-eigenvectors are in one-to-one correspondence with the
points on the unit sphere $S^{d-1}$ in $\mathbb R^d$ for all $\beta
\geq \beta_0$, except when $\sum_{w \in \mathbb
  Z^d} \mu(w)w = 0$, in which case there is only one ray, i.e. the
positive $e^{\beta_0}$-eigenvector is unique up to scalar
multiplication.\footnote{ When $d \in \{1,2\}$ and $\sum_w \mu(w) w =
0$, Theorem 8.15 in \cite{Wo} does not apply because of the transience condition. In these cases the essential uniqueness of the positive
  $e^{\beta_0}$-eigenvector must be derived from the results of
  Vere-Jones, \cite{V}, dealing with the recurrent case.} In particular, the present example includes cases where the base
simplex for the cone of $\beta$-KMS weights of the gauge action is the
same for all $\beta \geq \beta_0$, namely the simplex of Borel
probability measures on $S^{d-1}$, as well as examples where this is
only the case for $\beta > \beta_0$ and the
simplex collapses to a point when $\beta$ hits its lowest possible value, as it
was the case in the first example.

Concerning KMS states we observe that in the present example there are
no $\beta$-KMS states at all. To see this consider a positive
$e^{\beta}$-eigenvector $\xi$ for some $\beta \geq \beta_0$. Then $e^{\beta}
= \sum_{w \in \mathbb Z^d} \mu(w)\exp(<c',w>)$ for some $c' \in
\mathbb R^d$ and it follows from Corollary 8.10 in \cite{Wo} that
$$
\xi_v = \int_{\mathbb R^d} \exp(<c+c',v>) \ d\mu(c)
$$
for some Borel measure $\mu$ on $\mathbb R^d$. Since $\sum_{v\in
  \mathbb Z^d} \exp(< c+c',v>) = \infty$ for all $c \in \mathbb R^d$, it follows that $\sum_{v\in \mathbb Z^d} \xi_v =  \infty$, i.e. there are no positive
$e^{\beta}$-eigenvectors for $A$ satisfying condition c) in Corollary \ref{THMcor2}.

Concerning the $\Gamma$-invariant of $\pi_{\psi}(C^*(G))''$ when
$\psi$ is an extremal $\beta$-KMS weight, we observe that because $A$
is translation invariant, in the sense that
$A_{vw} = A_{v+uw+u}$ for all $u,v,w \in \mathbb Z^d$, the two numbers $d'_G$ and $d_G$ from
Section \ref{k40} agree. It follows therefore from
Proposition \ref{k23} that $\pi_{\psi}\left(C^*(G))\right)''$ is the
hyper-finite type $III_{\lambda}$-factor where $\lambda =
e^{- d_G\beta}$ for all $\beta \geq \beta_0$, cf. \cite{C2}.

\end{example}

\end{document}